\numberwithin{equation}{section}
\numberwithin{figure}{section}
\title{Coxeter systems with $2$-dimensional Davis complexes, growth rates and Perron numbers}
\author{Naomi Bredon}
\address{Department of Mathematics, University of Fribourg, Switzerland}
\email{naomi.bredon@unifr.ch}
\author{Tomoshige Yukita}
\address{Department of Mathematics, School of Education, Waseda University, Japan}
\email{yshigetomo@suou.waseda.jp}
\keywords{Coxeter system, Geometric group theory}
\subjclass[2020]{20F55, 20F65}
\newtheorem{thm}{Theorem}[section]
\newtheorem*{thm*}{Theorem}
\newtheorem{ex}{Example}
\newtheorem{lem}[thm]{Lemma}
\newtheorem{cor}[thm]{Corollary}
\newtheorem*{conj}{Conjecture}
\theoremstyle{definition}
\newtheorem{rem}{Remark}
\newtheorem{theorem}{Theorem}
\newcommand{\grouppresentation}[2]{\left\langle #1 \middle| #2 \right\rangle}
\newcommand{\abs}[1]{\left\lvert #1 \right\rvert}
\DeclareMathOperator{\Cox}{Cox}
\DeclareMathOperator{\Isom}{Isom}
\def\S{\mathbb{S}}
\def\E{\mathbb{E}}
\def\H{\mathbb{H}}
\def\X{\mathbb{X}}
\def\N{\mathbb{N}}
\def\R{\mathbb{R}}
\begin{document}

\begin{abstract}
In this paper,
we study growth rates of Coxeter systems with Davis complexes of dimension at most $2$.
We show that if the Euler characteristic $\chi$ of the nerve of a Coxeter system is vanishing (resp. positive),
then its growth rate is a Salem (resp. a Pisot) number.
In this way, we extend results due to Floyd and Parry \cite{Floyd92, Parry93}.
Moreover,
in the case where $\chi$ is negative,
we provide infinitely many non-hyperbolic Coxeter systems whose growth rates are Perron numbers.
\end{abstract}

\maketitle

\section{Introduction}
Let $\Gamma$ be a finitely generated group with generating set $S$.
For an element $x\in \Gamma$,
we write $\abs{x}_S$ for the word length with respect to $S$.
The growth rate of $(\Gamma, S)$ is defined by 
\begin{equation*}
\tau(\Gamma, S)=\limsup_{\ell\to \infty}\sqrt[\ell]{a_\ell} \ ,
\end{equation*}
where $a_\ell$ is the number of elements of $\Gamma$ of word length $\ell$.
Gromov's polynomial growth theorem \cite{Gromov81} states that $\Gamma$ has a nilpotent subgroup of finite index if and only if there exists positive constants $C>0$ and $d>0$ such that $a_\ell\leq C k^d$ for $\ell\geq 0$.
If $(\Gamma, S)$ satisfies the latter property,
then we say that $(\Gamma, S)$ has \emph{polynomial growth}.
In this case,
one has $\tau(\Gamma, S)=1$.
The pair $(\Gamma, S)$ is said to have \emph{exponential growth} when $\tau(\Gamma, S)>1$.
Note that there exist pairs of groups and finite generating sets which have neither polynomial growth nor exponential growth (see \cite{Grigorchuk84} for example).

\vspace{2mm}
Suppose that $(\Gamma, S)$ is an abstract Coxeter system,
that is,
$\Gamma$ is generated by $S$ and has the following presentation
\begin{equation*}
\Gamma=\grouppresentation{s_1,\ldots,s_N \ }{ \ (s_is_j)^{k_{ij}}\text{ for }1\leq i,j\leq N},
\end{equation*}
where $k_{ii}=1$ and $k_{ij}\geq 2$ (see Section \ref{Section2.1}).
There are three types of Coxeter systems:
spherical, affine, and otherwise.
If $(\Gamma, S)$ is spherical or affine,
then it has polynomial growth.
Therefore,
our interest lies in the growth rates of non-spherical, non-affine Coxeter systems.
Typical examples of such Coxeter systems are cofinite hyperbolic Coxeter systems (see Section \ref{Section2.2}).

\vspace{2mm}
In the study of the growth rates of hyperbolic Coxeter systems,
three kinds of real algebraic integers appear:
Salem numbers, Pisot numbers, and Perron numbers (see Section \ref{Section2.3}).
By results of Parry \cite{Parry93},
the growth rates of $2$- and $3$-dimensional cocompact hyperbolic Coxeter systems are Salem numbers.
Floyd showed that the growth rates of $2$-dimensional cofinite hyperbolic Coxeter systems are Pisot numbers \cite{Floyd92}.
Moreover,
their growth rates are limits of growth rates of $2$-dimensional cocompact hyperbolic Coxeter systems.
The second author proved that the growth rates of $3$-dimensional cofinite hyperbolic Coxeter systems are Perron numbers \cite{Yukita17, Yukita18}.
Kolpakov proved that the growth rates of particular $3$-dimensional cofinite hyperbolic Coxeter systems are Pisot numbers \cite{Kolpakov12}.
With all the above considerations,
we are interested in the relation between the geometric properties of Coxeter systems and the arithmetic nature of their growth rates as follows.

\vspace{2mm}
Let $(\Gamma, S)$ be an abstract Coxeter system.
Its \textit{nerve} $L(\Gamma, S)$ is the abstract simplicial complex defined as follows (see Section \ref{Section2.2}).
The vertex set is $S$.
For a nonempty subset $T=\{s_{i_1},\ldots,s_{i_n}\}\subset S$,
the vertices $s_{i_1},\ldots,s_{i_n}$ span an $(n-1)$-simplex if and only if $T$ generates a finite subgroup of $\Gamma$.
By abuse of notation, we write $L(\Gamma, S)$ for its geometric realization (see \cite[Chapter 1, Section 3]{Munkres84} for details).
The \textit{dimension of $L(\Gamma, S)$} is defined as the maximal dimension of simplices of $L(\Gamma, S)$.
It coincides with the maximal cardinality $-1$ of a subset of $S$ which generates a finite subgroup of $\Gamma$.
The dimension of $(\Gamma, S)$ is defined as $\dim{L(\Gamma, S)}+1$,
it is the dimension of the Davis complex of $(\Gamma, S)$; for details see \cite{Davis12, FeliksonTumarkin10}.

\vspace{2mm}
In this paper,
we study the arithmetic nature of the growth rates of non-spherical, non-affine Coxeter systems $(\Gamma, S)$ of dimension at most $2$.
We will prove the following theorems.
\begin{theorem}\label{theorem:A}
If $\chi(L(\Gamma, S))=0$,
then the growth rate $\tau(\Gamma, S)$ is a Salem number.
\end{theorem}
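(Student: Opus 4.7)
The plan is to apply Steinberg's formula to the growth series of $(\Gamma, S)$. Since $\dim L(\Gamma, S) \leq 1$, the spherical subsets of $S$ are the empty set, the singletons, and the pairs corresponding to edges $e = \{s, s'\}$ of $L(\Gamma, S)$ with associated finite dihedral label $m_e$. Writing $V = |S|$, $E$ for the number of edges, and $[n]_t = 1 + t + \cdots + t^{n-1}$, Steinberg's formula yields
\[
\frac{1}{W(t)} \;=\; 1 - \frac{V\, t}{[2]_t} + \sum_{e}\frac{t^{m_e}}{[2]_t\, [m_e]_t} \;=\; \frac{P(t)}{[2]_t\prod_e [m_e]_t}
\]
for the explicit integer polynomial
\[
P(t) \;=\; \bigl(1 + (1 - V)\,t\bigr)\prod_e [m_e]_t \;+\; \sum_{e} t^{m_e}\prod_{e' \neq e}[m_{e'}]_t.
\]
The denominator $[2]_t\prod_e [m_e]_t$ is a product of palindromic polynomials, and the smallest positive real root of $P(t)$ equals $1/\tau(\Gamma, S)$.

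The crux of the argument is a direct verification that the hypothesis $\chi(L(\Gamma, S)) = V - E = 0$ is exactly the condition under which $P(t)$ is reciprocal. Using the elementary identities $[m]_{1/t} = t^{-(m-1)}[m]_t$ and $1 - t^m = (1 - t)[m]_t$, a short expansion gives
\[
t^{\deg P}\,P(1/t) - P(t) \;=\; -\chi(L(\Gamma, S))\,(1 - t)\prod_e [m_e]_t,
\]
so $\chi(L) = 0$ forces $P$ to be reciprocal. In particular $\tau$ is a root of $P$, paired with $1/\tau$.

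Once reciprocity is in hand, factor out the maximal cyclotomic divisor of $P(t)$ to obtain a reciprocal polynomial $S(t) \in \mathbb{Z}[t]$ with no roots of unity. The non-negativity of the coefficients of $W(t)$ combined with Vivanti--Pringsheim makes $1/\tau$ a pole of minimal modulus of $W(t)$, so $\tau$ is the unique real root of $S(t)$ greater than $1$ and, by reciprocity, $1/\tau$ is the unique real root in $(0,1)$.

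The main obstacle is the final step: showing that every remaining root of $S(t)$ has modulus exactly $1$, so that the minimal polynomial of $\tau$ is a Salem polynomial. Following the strategy of Parry and Floyd in the cocompact and cofinite hyperbolic cases \cite{Parry93, Floyd92}, I would exploit the reciprocal symmetry to pair complex roots of $S$ into quadruples $(\alpha, \bar{\alpha}, \alpha^{-1}, \bar{\alpha}^{-1})$ and use the explicit combinatorial shape of $P(t)$ provided by Steinberg's formula, together with Kronecker's theorem, to rule out quadruples off the unit circle. This analysis is the technically delicate core of the proof; the point is that the hypothesis $\chi(L) = 0$ reduces it to the same kind of polynomial estimates that already succeeded in the hyperbolic setting treated by Parry and Floyd.
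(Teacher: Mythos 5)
Your observation that $\chi(L(\Gamma,S))=0$ is precisely the condition that makes the Steinberg numerator reciprocal is correct and is the heart of the matter, but the proposal stops exactly where the real work begins and therefore has a genuine gap. The step you flag as ``the technically delicate core'' --- showing that, after stripping cyclotomic factors, every remaining root of $P$ lies on the unit circle save for $\tau$ and $\tau^{-1}$ --- is not something that follows from reciprocity plus Kronecker's theorem alone. Reciprocity only pairs roots as $\{\alpha, 1/\alpha\}$; it does not bound how many such pairs lie off the unit circle. Kronecker's theorem tells you that a monic integer polynomial with all roots on or inside the unit circle is a product of cyclotomics, which is the wrong direction: what one must rule out is the presence of two or more reciprocal pairs off the circle. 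Your proposal gives no mechanism for doing this.

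The paper handles this step by invoking Parry's Corollary~1.8 (Lemma~\ref{lem:3.1}), which packages exactly the analysis you defer. But that lemma has a nontrivial hypothesis: the inequality
\[
\sum_{i=1}^{r}\left(1-\frac{1}{k_i}\right)E_i > 2,
\]
and verifying this inequality is where the paper's proof actually spends its effort. The verification splits into cases $N=3$, $N=4$, and $N\geq 5$, and the first two cases are not automatic: for $N=3$ one needs $\frac{1}{k_1}+\frac{1}{k_2}+\frac{1}{k_3}<1$, which is precisely the non-spherical, non-affine hypothesis; for $N=4$ one must argue that at least one edge label is $\geq 3$, using both the dimension bound and non-affineness (to exclude, e.g., $\widetilde{A}_1\times\widetilde{A}_1$). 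Your proposal never states this inequality, never checks it, and never uses the non-spherical/non-affine hypothesis at all --- yet without it the conclusion is false (an affine diagram with $\chi=0$, such as a $(3,3,3)$ triangle, has growth rate $1$, which is not a Salem number). Also, your appeal to Vivanti--Pringsheim establishes that $1/\tau$ is a pole on the circle of convergence, but not that $\tau>1$, nor that the Salem factor is unique; both again come out of the inequality above.

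In short: the reciprocity observation is right and matches the structure implicit in Parry's work, but the proposal leaves a hole precisely at the point that needs a quantitative hyperbolicity condition. The fix is to recognize that the problem reduces to the hypotheses of Parry's Corollary~1.8 and then to verify $\sum_i(1-1/k_i)E_i>2$ by the case analysis sketched above, which is exactly what the paper does.
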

\begin{theorem}\label{theorem:B}
If $\chi(L(\Gamma, S))\geq 1$,
then the growth rate $\tau(\Gamma, S)$ is a Pisot number.
Moreover,
there exists a sequence of Coxeter systems $(\Gamma_n, S_n)$ with vanishing Euler characteristic such that the growth rate $\tau(\Gamma_n, S_n)$ converges to $\tau(\Gamma, S)$ from below.
\end{theorem}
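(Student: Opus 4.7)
The plan is to combine Steinberg's formula for the growth series of $(\Gamma,S)$ with an explicit perturbation of the nerve, then to extract the Pisot property of $\tau$ from the Salem approximations coming from Theorem~\ref{theorem:A}.

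Since $\dim L(\Gamma,S)\le 1$, the nerve $L=L(\Gamma,S)$ is a labelled graph on $N=|S|$ vertices with $E$ edges, one per pair $\{s,s'\}$ with $m_{s,s'}:=k_{ss'}<\infty$. Steinberg's formula simplifies to
\begin{equation*}
\frac{1}{W(t^{-1})} \;=\; 1 - \frac{N}{1+t} + \frac{1}{1+t}\sum_e \frac{1}{[m_e]_t},
\end{equation*}
where $[m]_t:=1+t+\cdots+t^{m-1}$; clearing denominators, $\tau$ is the largest positive real root of the monic integer polynomial
\begin{equation*}
\Phi(t) \;:=\; (1+t-N)\prod_e [m_e]_t \;+\; \sum_e\prod_{e'\neq e}[m_{e'}]_t.
\end{equation*}
As $\chi(L)=N-E\ge 1$ and $N\ge 3$ (the cases $N\le 2$ forcing the system to be spherical or affine), we have $\binom{N}{2}-E\ge N-E$, so there are enough pairs $\{s,s'\}$ with $k_{ss'}=\infty$ to select $k:=\chi(L)$ of them. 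Assigning each the common order $n$ defines Coxeter systems $(\Gamma_n,S_n)$ with nerves $L_n$; for $n$ large no new rank-$3$ subset becomes spherical, so $L_n$ stays $1$-dimensional and $\chi(L_n)=\chi(L)-k=0$.

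A direct manipulation of Steinberg's sum yields the key recursion
\begin{equation*}
\Phi_n(t) \;=\; [n]_t^{k-1}\bigl([n]_t\,\Phi(t) + k\textstyle\prod_e [m_e]_t\bigr).
\end{equation*}
Because $[n]_t$ has only roots of unity as zeros, the growth rate $\tau_n$ is the largest real root of $\Psi_n(t):=[n]_t\Phi(t)+k\prod_e [m_e]_t$. From $\Psi_n(\tau)=k\prod_e [m_e]_\tau>0$, $\Psi_n(t)>0$ on $(\tau,\infty)$, and $[n]_{t_0}\Phi(t_0)\to-\infty$ as $n\to\infty$ for every fixed $t_0\in(1,\tau)$, $\Psi_n$ acquires a unique real root $\tau_n\in(1,\tau)$ with $\tau_n\nearrow\tau$ and $\tau-\tau_n=O(\tau^{-n})$. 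In particular $\tau_n>1$, so $(\Gamma_n,S_n)$ is neither spherical nor affine, and Theorem~\ref{theorem:A} identifies each $\tau_n$ as a Salem number, proving the second assertion.

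For the Pisot claim, $\Phi\in\mathbb{Z}[t]$ is monic so $\tau$ is a real algebraic integer greater than $1$; the main obstacle is to confine the Galois conjugates of $\tau$ strictly inside the unit disk. The approach is a Rouché comparison on $|z|=1+\delta$: using the asymptotic $|[n]_z|\sim |z|^{n-1}/|z-1|$ for $|z|>1$, the term $[n]_z\Phi(z)$ dominates $k\prod_e[m_e]_z$ there for $n$ large (pick $\delta>0$ so that $\Phi$ has no zero on this circle), and hence $\Psi_n$ has the same number of zeros inside $|z|<1+\delta$ as $[n]_t\Phi(t)$ does. The Salem property of $\tau_n$ forces exactly one zero of $\Psi_n$ to lie outside $\overline{D(0,1+\delta)}$, so the same holds for $\Phi$; letting $\delta\to 0^+$ confines every other root of $\Phi$ to the closed unit disk. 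Kronecker's theorem identifies the unit-modulus roots as roots of unity, absorbed by cyclotomic factors rather than by the minimal polynomial $P_\tau$. The hardest step is then to exclude the Salem alternative for $P_\tau$, which amounts to verifying $\Phi(1/\tau)\ne 0$: this will be deduced from $\Phi(\tau)=0$ by a sign analysis of $\sum_e (x^{m_e-1}-1)/[m_e]_x$ at $x=\tau>1$, the strict inequality $\chi(L)\ge 1$ preventing the two would-be relations at $\tau$ and $1/\tau$ from being compatible.
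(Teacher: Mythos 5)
Your overall skeleton matches the paper's: perturb $(\Gamma,S)$ by filling in some $\infty$-labels with a large parameter $n$ so that the Euler characteristic drops to $0$, get Salem approximants from Theorem~\ref{theorem:A}, and exploit a reciprocity-type identity between $\Phi$ and the numerator of the perturbed system. Your identity $(t-1)\Psi_n = t^n\Phi-\widetilde{\Phi}$ (equivalent to your $\Psi_n=[n]_t\Phi+k\prod_e[m_e]_t$ via $\Phi-\widetilde{\Phi}=\chi(t-1)\prod_e[m_e]_t$) is exactly the paper's $(z-1)P_n(z)=z^{m}P(z)-\widetilde{P}(z)$. Where the paper then invokes Floyd's Lemma (Lemma~\ref{lem:4.3}) to extract the Pisot property, you propose a direct Rouch\'e\,$+$\,Kronecker argument; this is a legitimate alternative, and your final step (showing $\Phi(1/\tau)\neq0$) can in fact be made precise by the same reciprocity: $\widetilde{\Phi}(1/\tau)=\tau^{-\deg\Phi}\Phi(\tau)=0$, hence $\Phi(1/\tau)=\chi(1/\tau-1)\prod_e[m_e]_{1/\tau}<0$.

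However, there is a genuine gap in the \emph{construction} of $(\Gamma_n,S_n)$. You select $k=\chi(L)$ arbitrary pairs with $k_{ss'}=\infty$ and assign them the common label $n$, claiming ``for $n$ large no new rank-$3$ subset becomes spherical.'' This is false: a triangle whose other two edges are labelled $2$ gives $\{2,2,n\}$, which is the \emph{spherical} group $A_1\times I_2(n)$ for every $n$, so the nerve acquires a $2$-simplex and $\dim(\Gamma_n,S_n)\geq 3$. In that case Steinberg's formula for $1/f_{(\Gamma_n,S_n)}(z^{-1})$ picks up extra rank-$3$ terms, your formula for $\Psi_n$ is no longer the numerator of the growth series, and Theorem~\ref{theorem:A}/Lemma~\ref{lem:3.3} cannot be applied to conclude $\tau_n$ is Salem. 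Worse, for some $(\Gamma,S)$ (e.g.\ a star with all edges labelled $2$) \emph{every} choice of an $\infty$-pair creates such a triangle, so no careful selection rescues the construction. The paper avoids this by (i) treating separately the cases where $X(\Gamma,S)$ has no edge, or is a tree with all labels $2$ (where $\tau(\Gamma,S)=N-1$ resp.\ $N-2$ is an integer and hence trivially Pisot), and (ii) in the remaining cases, choosing the added edge so that the newly created cycle contains an edge with label $\geq3$ (Lemma~\ref{lem:4.1}), or joining two connected components (which creates no cycle at all). You would need an analogous case analysis and edge selection for your argument to go through.
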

\begin{theorem}\label{theorem:C}
There exists infinitely many non-hyperbolic Coxeter systems $(\Gamma, S)$ with $\chi(L(\Gamma, S))\leq -1$ whose growth rates are Perron numbers.
\end{theorem}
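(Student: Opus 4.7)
The plan is to exhibit an explicit infinite family of $2$-dimensional Coxeter systems meeting all three hypotheses and to read off the Perron property of their growth rates from the growth polynomial produced by Steinberg's formula. A natural candidate is the family $\{(\Gamma_n, S_n)\}_{n \geq 4}$ whose nerve $L_n$ is the complete graph on $n$ vertices with every edge labelled $3$. Three structural conditions are easily verified: every three-element subset of $S_n$ generates the affine Coxeter group $\widetilde{A}_2$, which is infinite, so no $2$-simplex of $L_n$ is filled in and $\dim L_n = 1$; the inclusion of $\widetilde{A}_2$ then shows, via Moussong's criterion, that $(\Gamma_n, S_n)$ is not hyperbolic; and $\chi(L_n) = n - \binom{n}{2} \leq -2$ for every $n \geq 4$.

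Next, I would apply Steinberg's formula
\[
\frac{1}{f_{S_n}(t^{-1})} \;=\; 1 \;-\; \frac{n}{1+t} \;+\; \frac{\binom{n}{2}}{(1+t)(1+t+t^2)},
\]
clear denominators, and identify $\tau_n := \tau(\Gamma_n, S_n)$ with the largest positive real root of the cubic
\[
p_n(t) \;=\; t^3 \;+\; (2-n)\,t^2 \;+\; (2-n)\,t \;+\; \tfrac{(n-1)(n-2)}{2}.
\]

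Finally, to show $\tau_n$ is a Perron number, I would combine two observations. Since $p_n(0) > 0$ while the product of the three roots of $p_n$ equals $-(n-1)(n-2)/2 < 0$, once we know $p_n$ has three real roots they necessarily split as one negative root $r_2$ and two positive roots $0 < r_1 < \tau_n$; the Vieta relation $r_1 + r_2 + \tau_n = n - 2 > 0$ combined with $r_1 r_2 < 0$ then forces $|r_2| < r_1 < \tau_n$, so $\tau_n$ strictly dominates in modulus and is therefore a Perron number. The main technical obstacle is the uniform verification that the cubic $p_n$ has three real roots for every $n \geq 4$, equivalently that the value of $p_n$ at its positive critical point $t^* = \big((n-2) + \sqrt{(n-2)(n+1)}\big)/3$ is negative; this boils down to an elementary but slightly delicate asymptotic estimate. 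A cleaner alternative would express $\tau_n$ as the spectral radius of a non-negative irreducible integer matrix whose characteristic polynomial is a multiple of $p_n$, and invoke the Perron--Frobenius theorem directly.
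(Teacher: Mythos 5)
Your proposed family---complete graphs $K_n$ with every edge labelled $3$---is genuinely different from the paper's examples (wheel graphs, windmill graphs, friendship graphs, triangulated bouquets). In fact, your family is \emph{not} covered by the paper's Theorem \ref{thm:5.1}: hypothesis $(ii)$ there requires $E = a(N-1)$ with $1 < a \leq \tfrac{(1+\varphi)^2}{3} \approx 2.28$, whereas $K_n$ gives $a = n/2$, which violates the bound for $n \geq 5$. The paper's argument uses Rouch\'e's theorem on the disk $|z| = r_\star$ to handle a denominator polynomial whose degree grows with the common label $k$; since you fix $k = 3$, you get a cubic and can attempt a direct root analysis instead. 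That is a legitimate alternative route, but as written it has three gaps.

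First, you include $n=4$, but $K_4$ with all labels $3$ is the reflection group of the regular ideal hyperbolic tetrahedron, a $3$-dimensional cofinite hyperbolic Coxeter system --- precisely the example $(\Gamma_0, S_0)$ the paper singles out at the end of Section \ref{Section5} as a \emph{hyperbolic} ideal Coxeter system. In the paper's usage, ``hyperbolic'' means geometric in $\H^n$, not Gromov-hyperbolic, so Moussong's criterion is the wrong tool and $n=4$ must be excluded; the family should begin at $n \geq 5$, and for those values you would need a separate (but short) argument that the Lorentzian Gram matrix does not bound a finite-volume hyperbolic simplex.

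Second, the Vieta step is not a valid implication. From $r_1 + r_2 + \tau_n = n-2 > 0$ and $r_1 r_2 < 0$ one \emph{cannot} conclude $|r_2| < r_1 < \tau_n$; indeed for $n = 4$ the roots of $p_4$ are $1$ and $\tfrac{1 \pm \sqrt{13}}{2}$, so $r_1 = 1 < |r_2| = \tfrac{\sqrt{13}-1}{2}$. What is actually needed is only $|r_2| < \tau_n$, and that requires extra input: since $p_n$ is negative on $(-\infty, r_2)$, the claim is equivalent to $p_n(-\tau_n) < 0$, and using $p_n(\tau_n)=0$ one computes $p_n(-\tau_n) = -2\tau_n\bigl(\tau_n^2 - (n-2)\bigr)$. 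One must therefore show $\tau_n > \sqrt{n-2}$, which follows from $p_n(\sqrt{n-2}) = \tfrac{(n-2)(3-n)}{2} < 0$ for $n > 3$ together with the sign pattern of $p_n$ on $(r_1, \tau_n)$.

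Third, the real-rootedness of $p_n$ is acknowledged as ``the main technical obstacle'' but left unresolved, and the argument genuinely depends on it (the sign-pattern analysis assumes three real roots). It does hold: writing $m = n-2 \geq 2$, the discriminant of $p_n$ works out to $\Delta = \tfrac{m^2}{4}\bigl(8m^3 + 21m^2 - 2m - 27\bigr)$, and the cubic factor is zero at $m=1$ and strictly increasing afterwards, hence positive for $m \geq 2$. But as submitted the proposal merely gestures at this and is therefore incomplete.
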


This paper is organized as follows.
In Section \ref{Section2},
we provide the necessary background about Coxeter systems, their nerves, and their growth rates.
Theorem \ref{theorem:A} is discussed in Section \ref{Section3} where we consider Coxeter systems with vanishing Euler characteristic.
This extends the result by Parry \cite{Parry93}.
Section \ref{Section4} is devoted to the study of Coxeter systems with positive Euler characteristic where we prove Theorem \ref{theorem:B} generalizing Floyd's result \cite{Floyd92}.
In Section \ref{Section5},
we consider infinite sequences of Coxeter systems with negative Euler characteristic and prove in Theorem \ref{thm:5.1} that their growth rates are Perron numbers.
In this way,
we obtain Theorem \ref{theorem:C}.

\section{Preliminaries}\label{Section2}

\subsection{Coxeter systems}\label{Section2.1}
For a group $\Gamma$ with generating set $S=\{s_1,\ldots,s_N\}$,
the pair $(\Gamma, S)$ is called a \textit{Coxeter system} if $\Gamma$ has the presentation 
\begin{equation*}
\Gamma=\grouppresentation{s_1,\ldots,s_N \ }{ \ (s_is_j)^{k_{ij}}\text{ for }1\leq i,j\leq N},
\end{equation*}
where $k_{ii}=1$ and $k_{ij}\geq 2$.
In the case where $s_is_j$ has infinite order,
we put $k_{ij}=\infty$.
The \textit{rank} of a Coxeter system $(\Gamma, S)$ is defined as the cardinality $\#S$ of $S$.
For a subset $T\subset S$,
the subgroup $\Gamma_T$ of $\Gamma$ generated by $T$ is called a \textit{parabolic subgroup} of $\Gamma$, with $\Gamma_\emptyset=\{1\}$ by convention.

\vspace{2mm}
Given a Coxeter system $(\Gamma, S)$ of rank $N$ define the \textit{cosine matrix} $C_{(\Gamma, S)}=(c_{ij})\in M_N(\R)$ associated to $(\Gamma, S)$ as follows:
\begin{equation*}
c_{ij}=\begin{cases}
-\cos{\dfrac{\pi}{k_{ij}}} & \text{if }k_{ij}<\infty \ ,\\
-1 & \text{if }k_{ij}=\infty \ .
\end{cases}
\end{equation*}
The Coxeter system $(\Gamma, S)$ is said to be \textit{spherical} (resp. \textit{affine}),
if $C_{(\Gamma, S)}$ is positive definite (resp. positive semidefinite).

\vspace{2mm}
In this paper,
a graph $X$ is said to be \textit{simple} if $X$ has no loops and multiple edges.
We associate to a Coxeter system $(\Gamma, S)$ two kinds of edge-labeled simple graphs:
the \textit{Coxeter diagram} $\Cox{(\Gamma, S)}$ and the \textit{presentation diagram} $X(\Gamma, S)$.
The Coxeter diagram $\Cox{(\Gamma, S)}$ is defined as follows.
The vertex set is $S$.
Two vertices $s_i$ and $s_j$ are connected by an edge if and only if $k_{ij}\geq 3$.
The edge between $s_i$ and $s_j$ is labeled by $k_{ij}$ if $k_{ij}\in{\{4,5,\ldots\}\cup{\{\infty\}}}$.
A Coxeter system $(\Gamma, S)$ is said to be \textit{irreducible} if the underlying graph of $\Cox{(\Gamma, S)}$ is connected.
It is known that a spherical (resp. affine) Coxeter system decomposes into a direct product of irreducible spherical (resp. spherical and affine) Coxeter systems.
The Coxeter diagrams of irreducible spherical and affine Coxeter systems are depicted in Figure \ref{fig:figure2.1} and Figure \ref{fig:figure2.2}, respectively (see \cite[p.32, p.34]{Humphreys90}).
\begin{figure}[htbp]
\centering
\includegraphics[scale=0.5]{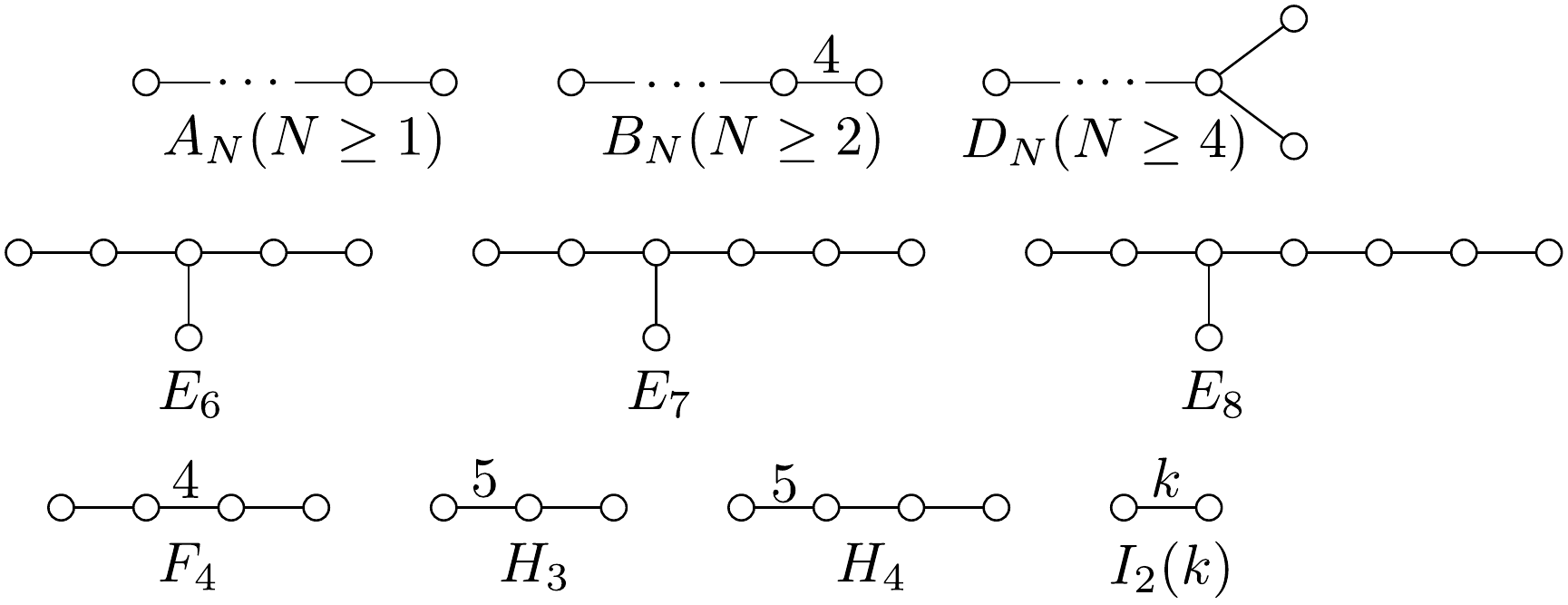}
\caption{Irreducible spherical Coxeter systems of rank $N$}
\label{fig:figure2.1}
\end{figure}
\begin{figure}[htbp]
\centering
\includegraphics[scale=0.5]{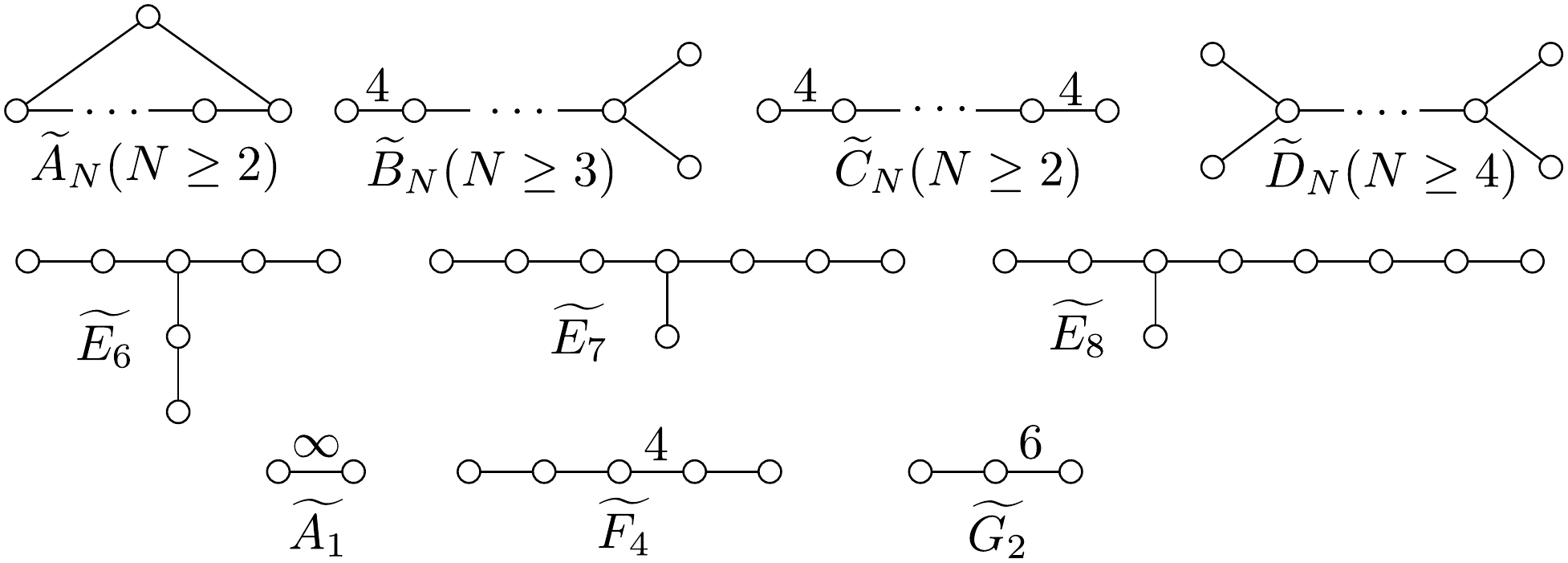}
\caption{Irreducible affine Coxeter systems of rank $N+1$}
\label{fig:figure2.2}
\end{figure}

\noindent
Let $(\Gamma, S)$ be a Coxeter system.
The \textit{presentation diagram $X(\Gamma, S)$} is defined as follows.
The vertex set is $S$.
Two vertices $s_i$ and $s_j$ are connected by an edge labeled by $k_{ij}$ when $k_{ij}<\infty$.
It follows that the underlying graphs of the presentation diagrams of spherical Coxeter systems of rank $N$ are complete graphs with $N$ vertices.
For example,
Figure \ref{fig:figure2.3} shows the presentation diagrams of the spherical Coxeter systems of rank $3$.
\begin{figure}[htbp]
\centering
\includegraphics[scale=0.5]{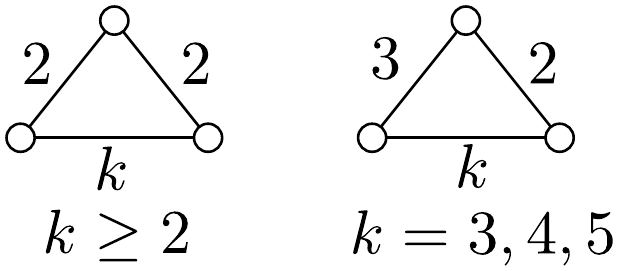}
\caption{The presentation diagrams of the spherical Coxeter systems of rank $3$}
\label{fig:figure2.3}
\end{figure}

\subsection{Geometric Coxeter groups and nerves}\label{Section2.2}
For more details about geometric Coxeter groups and nerves of Coxeter systems we refer to \cite{Davis12, Ratcliffe19}.

\vspace{2mm}
Let us denote by $\X^n$ the $n$-dimensional spherical space $\S^n$, Euclidean space $\E^n$, or hyperbolic space $\H^n$.
An \textit{$n$-dimensional Coxeter polytope} $P\subset \X^n$ is the intersection of finitely many half-spaces whose interior is nonempty and dihedral angles are of the form $\pi/\mathord{k}$ for $k\geq 2$ or equal to zero.
Given an $n$-dimensional Coxeter polytope $P\subset \X^n$ the set $S_P$ of the reflections in the bounding hyperplanes of $P$ generates a discrete subgroup $\Gamma_P$ of $\Isom{(\X^n)}$.
The pair $(\Gamma_P, S_P)$ is a Coxeter system, and is called an \textit{$n$-dimensional geometric Coxeter system associated with $P$}.
The group $\Gamma_P$ is called the \textit{$n$-dimensional geometric Coxeter group associated with $P$}.
It is known that $P$ is a fundamental polytope for $\Gamma_P$ and the orbit $\Set{gP|g\in \Gamma_P}$ of $P$ gives rise to an exact tessellation of $\X^n$.
Furthermore,
$\Gamma_P$ is said to be \textit{cocompact} (resp. \textit{cofinite})
when $P$ is compact (resp. not compact but of finite volume).
For a hyperbolic Coxeter polytope $P$,
we say that $\Gamma_P$ is \textit{ideal}
when every vertex of $P$ lies on the boundary at infinity $\partial{\H^n}$.
For each irreducible spherical (resp. affine) Coxeter system $(\Gamma, S)$,
there exists a spherical (resp. compact Euclidean) Coxeter polytope $P$ such that $(\Gamma, S)=(\Gamma_P, S_P)$.
Therefore,
if $(\Gamma, S)$ is a spherical (resp. affine) Coxeter system,
then $\Gamma$ is finite (resp. virtually nilpotent).
In contrast to this,
if $(\Gamma, S)$ is non-spherical and non-affine,
then $\Gamma$ contains a free group of rank at least $2$; see \cite{Harpe87}.

\vspace{2mm}
Let $(\Gamma, S)$ be an abstract Coxeter system.
The \textit{nerve} $L(\Gamma, S)$ is an abstract simplicial complex defined as follows.
The vertex set is $S$,
and for a non-empty subset $T=\{s_{i_1},\ldots,s_{i_n}\}\subset S$,
the vertices $s_{i_1},\ldots,s_{i_n}$ span an $(n-1)$-simplex if and only if the parabolic subgroup $\Gamma_T$ is finite.
For simplicity of notation,
we continue to write $L(\Gamma, S)$ for its geometric realization (see \cite[Chapter 1, Section 3]{Munkres84} for details).
If the maximal rank for spherical parabolic subgroups equals $d+1$,
then we say that $\dim{L(\Gamma, S)}=d$. 
The \textit{dimension of $(\Gamma, S)$}, denote by $\dim{(\Gamma, S)}$, is defined as $\dim{L(\Gamma, S)}+1$.
It is the dimension of the Davis complex of $(\Gamma, S)$; see \cite{Davis12, FeliksonTumarkin10}. 

\vspace{2mm}
In this paper,
we consider Coxeter systems of dimension at most $2$.
In particular,
such a class of Coxeter systems contains hyperbolic Coxeter groups of dimension $2$ and ideal hyperbolic Coxeter groups of dimension $3$.
Indeed,
for such groups,
maximal spherical subgroups are of rank at most $2$.
For a Coxeter system $(\Gamma, S)$ of dimension at most $2$,
it is easy to see that the underlying graph of $X(\Gamma, S)$ is the geometric realization of the nerve $L(\Gamma, S)$.
Therefore the Euler characteristic $\chi(L(\Gamma, S))$ equals the one of the underlying graph of $X(\Gamma, S)$.
It is known that the Euler characteristic of a graph is the number of vertices minus the number of edges.

\subsection{Growth rates of Coxeter systems}\label{Section2.3}
Let $(\Gamma, S)$ be a Coxeter system.
For $x\in \Gamma$,
we define its \textit{word length with respect to $S$} by
\begin{equation*}
\abs{x}_S=\min{\Set{n\in \N|x=s_1\cdots s_n \ (s_1,\ldots,s_n\in S)}}.
\end{equation*}
By convention $\abs{1}_S=0$.
The \textit{growth series $f_{(\Gamma, S)}(z)$} of $(\Gamma, S)$ is defined by
\begin{equation*}
f_{(\Gamma, S)}(z)=\sum_{\ell\geq 0} a_\ell z^\ell,
\end{equation*}
where $a_\ell$ is the number of the elements of $\Gamma$ of word length $\ell$.
If $(\Gamma, S)$ is spherical,
then $f_{(\Gamma, S)}(z)$ is a polynomial and called the \textit{growth polynomial} of $(\Gamma, S)$.

\vspace{2mm}
By a result of Solomon \cite{Solomon66},
the growth polynomials of spherical Coxeter systems can be computed in terms of its exponents.
For the list of exponents, see \cite{Humphreys90}.
For example,
the exponents of $A_N$ are given by $1,2,\ldots,N-1$,
and those of $I_2(k)$ are $1, k-1$.
For postive integers $m,m_1,\ldots,m_r$,
we put
\begin{equation*}
[m]=1+z+\cdots+z^{m-1}\quad \text{and}\quad  [m_1,\ldots, m_r]=[m_1]\cdots[m_r].
\end{equation*}
Solomon's formula states that for a spherical Coxeter system $(\Gamma, S)$ with the exponents $m_1,\ldots,m_r$, one has $f_{(\Gamma, S)}(z)=[m_1+1,\ldots,m_r+1]$.

\vspace{2mm}
If $(\Gamma, S)$ is non-spherical,
then the inverse of the radius of convergence of $f_{(\Gamma, S)}(z)$ is called the \textit{growth rate} of $(\Gamma, S)$, denoted by $\tau(\Gamma, S)$.
The Cauchy-Hadamard formula gives
\begin{equation*}
\tau(\Gamma, S)=\limsup_{\ell\to \infty}\sqrt[\ell]{a_\ell} \ .
\end{equation*}
If $(\Gamma, S)$ is affine,
the growth rate $\tau{(\Gamma, S)}$ is equal to $1$ by Gromov's polynomial growth theorem \cite{Gromov81}.
The following formula,
established by Steinberg,
is an important tool to compute the growth series of Coxeter systems.
\begin{thm}[Steinberg's formula \cite{Steinberg68}]
Let $(\Gamma, S)$ be a Coxeter system.

The following identity holds for the growth series $f_{(\Gamma, S)}(z)$. 
\begin{equation}
\dfrac{1}{f_{(\Gamma, S)}(z^{-1})}=\sum_{\substack{T\subset S \\ \#\Gamma_T<\infty}}\dfrac{(-1)^{\#T}}{f_{(\Gamma_T, T)}(z)} \ .\label{eq:2-1}
\end{equation}
\end{thm}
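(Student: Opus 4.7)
The plan is to clear denominators and then interpret each side as a sum indexed by elements of $\Gamma$. Multiplying the claimed identity by $f_{(\Gamma,S)}(z^{-1})$ reduces the task to showing
\[
1 \;=\; \sum_{\substack{T\subset S\\ \#\Gamma_T<\infty}} (-1)^{\#T}\,\frac{f_{(\Gamma,S)}(z^{-1})}{f_{(\Gamma_T,T)}(z)}.
\]

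Two standard structural facts about Coxeter systems drive the argument. First, for each spherical $T\subset S$, every $w\in\Gamma$ admits a unique factorization $w=u\cdot v$ with $u\in \Gamma^T:=\Set{x\in\Gamma | \abs{xs}_S>\abs{x}_S \text{ for all } s\in T}$ and $v\in \Gamma_T$, satisfying $\abs{w}_S=\abs{u}_S+\abs{v}_T$; this yields the factorization $f_{(\Gamma,S)}(z)=f_{\Gamma^T}(z)\cdot f_{(\Gamma_T,T)}(z)$. Second, the growth polynomial of a spherical Coxeter system is palindromic, so $f_{(\Gamma_T,T)}(z^{-1})=z^{-\abs{w_0(T)}_T}f_{(\Gamma_T,T)}(z)$, where $w_0(T)$ denotes the longest element of $\Gamma_T$. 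Together these rewrite each summand as
\[
\frac{f_{(\Gamma,S)}(z^{-1})}{f_{(\Gamma_T,T)}(z)} \;=\; z^{-\abs{w_0(T)}_T}\,f_{\Gamma^T}(z^{-1}) \;=\; \sum_{u\in\Gamma^T}z^{-\abs{u\,w_0(T)}_S}.
\]

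I would then reindex the resulting double sum over pairs $(T,u)$ by the element $w:=u\cdot w_0(T)\in\Gamma$. The key identification, which I regard as the main obstacle, is that for a fixed $w\in\Gamma$ the pairs $(T,u)$ with $u\in\Gamma^T$ and $w=u\cdot w_0(T)$ are in bijection with the subsets $T\subseteq D(w)$ of the right descent set $D(w):=\Set{s\in S | \abs{ws}_S<\abs{w}_S}$. One direction is easy: from $w_0(T)\,s<w_0(T)$ for $s\in T$ one deduces via length-additivity that $T\subseteq D(w)$. For the converse, given $T\subseteq D(w)$, the unique parabolic factorization $w=u'v'$ with $u'\in\Gamma^T$ and $v'\in\Gamma_T$ has $D_T(v')=D(w)\cap T=T$, forcing $v'=w_0(T)$ and hence $u=w\,w_0(T)^{-1}\in\Gamma^T$. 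Crucial throughout is the classical fact (see e.g.\ \cite{Davis12}) that $\Gamma_{D(w)}$ is always finite, so every subset $T\subseteq D(w)$ indexes a term in our sum. Granting the bijection, the right-hand side collapses to
\[
\sum_{w\in\Gamma}z^{-\abs{w}_S}\sum_{T\subseteq D(w)}(-1)^{\#T},
\]
whose inner sum vanishes unless $D(w)=\emptyset$, i.e.\ unless $w=1$, leaving precisely the constant $1$.
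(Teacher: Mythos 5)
The paper states Steinberg's formula as a quoted result from \cite{Steinberg68} and gives no proof, so there is nothing in the paper to compare your argument against line by line. That said, your proof is correct and complete, and it is the standard combinatorial argument via the parabolic coset decomposition and descent sets, rather than Steinberg's original, more algebraic derivation. The three ingredients you isolate are exactly what is needed: length-additive factorization $f_{(\Gamma,S)}(z)=f_{\Gamma^T}(z)\,f_{(\Gamma_T,T)}(z)$ for the minimal coset representatives $\Gamma^T$; palindromicity $f_{(\Gamma_T,T)}(z^{-1})=z^{-\abs{w_0(T)}_T}f_{(\Gamma_T,T)}(z)$ of the Poincar\'e polynomial of a spherical $\Gamma_T$; and the classical fact that the right descent set $D(w)$ of any $w$ generates a finite parabolic. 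The last point is the one place where one has to be careful, since without it the bijection between pairs $(T,u)$ and subsets $T\subseteq D(w)$ would not range over the intended index set — you correctly flag it and use it. The verification that $D_T(v')=D(w)\cap T$, hence $v'=w_0(T)$ when $T\subseteq D(w)$, and the final inclusion--exclusion collapse $\sum_{T\subseteq D(w)}(-1)^{\#T}=\delta_{D(w),\emptyset}$, are both sound, so the whole sum reduces to $1$ as required.
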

Steinberg's formula implies that the growth series is a rational function and satisfies that  
\begin{equation*}
\dfrac{1}{f_{(\Gamma, S)}(z^{-1})}=\dfrac{P(z)}{Q(z)} \ ,
\end{equation*}
where $P(z)$ and $Q(z)$ are monic polynomials with integer coefficients.
It follows that the growth rate $\tau(\Gamma, S)$ is the real root of $P(z)$ whose modulus is maximal among the roots of $P(z)$,
and hence $\tau(\Gamma, S)\geq 1$ is a real algebraic integer.

\begin{ex}\label{ex:1}
Consider the abstract Coxeter system $(\Gamma_\star,S_\star)$ whose presentation diagram is depicted in Figure \ref{fig:figure2.4}.
The spherical subgroups are $A_1$ and $A_2$, both with multiplicity four.
By Steinberg's formula \eqref{eq:2-1}, we compute its growth series:
 \begin{equation*}
\frac{1}{f_{(\Gamma_\star, S_\star)}(z^{-1})}=1-\frac{4}{[2]}+\frac{4}{[2,3]}=\frac{[2,3]-4[3]+4}{[2,3]}.
\end{equation*}
We write $P(z)$ for the numerator of $\frac{1}{f_{(\Gamma_\star, S_\star)}(z^{-1})}$, that is, 
\begin{equation*}
P(z)=1-2z-2z^2+z^3.
\end{equation*}
One easily sees that $P(-1)=0$ and that the greatest positive root of $P(z)$ is given by
$\tau(\Gamma_\star, S_\star)=\frac{3-\sqrt{5}}{2}=\frac{1}{(\varphi-1)^2}$, where $\varphi$ is the golden ratio.
\end{ex}
\vspace{-4mm}
\begin{figure}[htbp]
\centering
\includegraphics[scale=0.5]{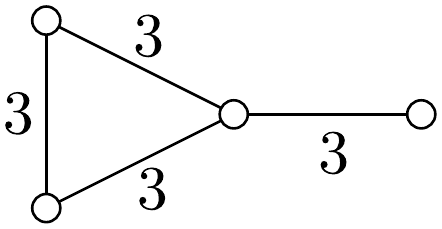}
\caption{The presentation diagram of $(\Gamma_\star,S_\star)$}
\label{fig:figure2.4} 
\end{figure}

\vspace{-2mm}
From now on, we focus on the growth rates of non-spherical, non-affine Coxeter systems.
Typical examples of such Coxeter systems are hyperbolic Coxeter systems.
Three kinds of real algebraic integers appear in the study of the growth rates of hyperbolic Coxeter systems:
Salem numbers, Pisot numbers, and Perron numbers (see \cite[p.84]{Bertin92}).

\vspace{2mm}
An algebraic integer $\tau>1$ of degree at least $4$ is called a \textit{Salem number} if the inverse $\tau^{-1}$ is a Galois conjugate of $\tau$ and the other Galois conjugates lie on the unit circle.
The minimal polynomial of a Salem number is called a \textit{Salem polynomial}.
Parry showed that the growth rates of $2$- and $3$-dimensional cocompact hyperbolic Coxeter systems are Salem numbers \cite{Parry93}.

\vspace{2mm}
An algebraic integer $\tau>1$ is called a \textit{Pisot number} if $\tau$ is an integer or if all of its other Galois conjugates are contained in the unit open disk.
The minimal polynomial of a Pisot number is called a \textit{Pisot polynomial}.
Floyd showed that the growth rates of $2$-dimensional cofinite hyperbolic Coxeter systems are Pisot numbers \cite{Parry93}.
Moreover,
for a $2$-dimensional cofinite hyperbolic Coxeter systems $(\Gamma, S)$,
there exists a sequence of $2$-dimensional cocompact hyperbolic Coxeter systems $(\Gamma_n, S_n)$ whose growth rates $\tau(\Gamma_n, S_n)$ converges to $\tau(\Gamma, S)$ from below.

\vspace{2mm}
An algebraic integer $\tau>1$ is called a \textit{Perron number} if $\tau$ is an integer or if all of its other Galois conjugates are strictly less than $\tau$ in absolute value.
Note that Salem numbers and Pisot numbers are Perron numbers.
The second author showed that the growth rates of $3$-dimensional cofinite hyperbolic Coxeter systems are Perron numbers \cite{Yukita17, Yukita18}.
Note that Komori-Yukita \cite{KomoriYukita15}, and Nonaka-Kellerhals \cite{NonakaKellerhals17} showed that the growth rates of cofinite $3$-dimensional hyperbolic ideal Coxeter systems are Perron numbers.
For a $4$-dimensional cocompact Coxeter system $(\Gamma_P, S_P)$,
Kellerhals and Perren proved that the growth rates are Perron numbers for $\#S_P=5$ and $6$ \cite{RuthPerren11}.
In particular, they conjectured that the growth rates of hyperbolic Coxeter systems are Perron numbers.

\vspace{2mm}
This is a motivation to relate geometric properties of Coxeter systems to the arithmetic nature of their growth rates.
The aim of this paper is to extend the results of Floyd and Parry to \emph{non-spherical, non-affine, and non-hyperbolic} Coxeter systems of dimension at most $2$.

\vspace{2mm}
We use the partial order on the set of Coxeter systems defined by McMullen \cite{McMullen02}.
Let $(\Gamma, S)$ and $(\Gamma', S')$ be Coxeter systems.
Denote $(\Gamma, S)\preceq (\Gamma', S')$ when there exists an injection $\iota:S\to S'$ such that $k(s, t)\leq k'(\iota(s), \iota(t))$,
where $k(s, t)$ and $k'(\iota(s), \iota(t))$ are the orders of $st$ and $\iota(s)\iota(t)$, respectively.
\begin{thm}[Corollary 3.2 \cite{Terragni16}]\label{thm:2.2}
If $(\Gamma, S)\preceq{(\Gamma', S')}$,
then $\tau(\Gamma, S)\leq \tau(\Gamma', S')$.
\end{thm}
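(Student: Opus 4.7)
The strategy is to establish the stronger coefficient-wise inequality $a_\ell \le a'_\ell$ for all $\ell \ge 0$, where $a_\ell$ (resp.\ $a'_\ell$) is the number of elements of $\Gamma$ (resp.\ $\Gamma'$) of word length $\ell$. By the Cauchy--Hadamard formula this gives $\tau(\Gamma, S) \le \tau(\Gamma', S')$ immediately, so the plan reduces to building a length-preserving injection $\Phi \colon \Gamma \hookrightarrow \Gamma'$. A preliminary reduction replaces $(\Gamma', S')$ by its parabolic subsystem $\langle \iota(S)\rangle \subseteq \Gamma'$, which is convex in the Cayley graph of $(\Gamma', S')$ and therefore has a coefficient-wise smaller growth series. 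After identifying $S$ with $\iota(S)$, we may assume that the two systems share the same generating set $S$ with $k(s,t) \le k'(s,t)$ pointwise.

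The technical heart of the argument is a \emph{forbidden-substring lemma}: if $w \in S^*$ is reduced in $(\Gamma, S)$, then $w$ contains no substring of the form $\underbrace{sts\cdots}_{k(s,t)+1}$. Indeed, applying the $k(s,t)$-braid relation to the first $k(s,t)$ letters of such a putative substring leaves the unchanged $(k(s,t){+}1)$-th letter equal to the new $k(s,t)$-th letter, producing a cancellable pair $ss$ or $tt$ and hence a strictly shorter word representing the same element of $\Gamma$ --- contradicting reducedness.

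From this lemma two consequences follow by Tits' theorem. \emph{Length preservation:} every $\Gamma$-reduced word is $\Gamma'$-reduced. Otherwise, some sequence of $\Gamma'$-braid moves would shorten $w$; but the first such move requires a substring $\underbrace{st\cdots}_{k'(s,t)}$ inside a $\Gamma$-reduced word, which by the lemma forces $k'(s,t) = k(s,t)$, so the move is already a $\Gamma$-braid. Since braid moves preserve $\Gamma$-reducedness, the argument iterates and produces a $\Gamma$-reduction of $w$, contradicting the hypothesis. \emph{Injectivity:} fix a linear order on $S$, let $w_x$ be the ShortLex reduced expression of $x \in \Gamma$, and put $\Phi(x) = [w_x] \in \Gamma'$. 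If $\Phi(x_1) = \Phi(x_2)$, Matsumoto's theorem connects $w_{x_1}$ and $w_{x_2}$ in $\Gamma'$ by a chain of $\Gamma'$-braid moves; exactly the same forbidden-substring argument forces each move to be a $\Gamma$-braid, so $w_{x_1} = w_{x_2}$ in $\Gamma$ and hence $x_1 = x_2$.

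The main obstacle is the forbidden-substring lemma, which is precisely the step that converts the numerical hypothesis $k \le k'$ into a structural restriction on $\Gamma$-reduced words capable of ruling out every genuinely new $\Gamma'$-braid move. Once this lemma is in place, iterated application of Tits' theorem supplies both the length-preservation and the injectivity of $\Phi$, yielding $a_\ell \le a'_\ell$ for every $\ell$ and therefore the desired inequality on growth rates.
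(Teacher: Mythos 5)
The paper does not prove this statement; it is quoted directly from Terragni~\cite{Terragni16} (Corollary~3.2), so there is no in-text proof to compare against. Evaluated on its own merits, your argument is correct and is a clean, self-contained combinatorial proof. The reduction to the parabolic $\langle\iota(S)\rangle$ is legitimate because standard parabolic subgroups are convex (the $S'$-length of an element of $\langle\iota(S)\rangle$ equals its $\iota(S)$-length), so the growth coefficients of the parabolic are dominated by those of $(\Gamma',S')$. The forbidden-substring lemma is correct: in a $\Gamma$-reduced word, an alternating $\{s,t\}$-block of length $k(s,t)+1$ would, after a single $\Gamma$-braid move on its first $k(s,t)$ letters, produce an adjacent pair $ss$ or $tt$, contradicting reducedness. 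From this, your induction showing that every $\Gamma'$-braid move applied to a $\Gamma$-reduced word must have $k'(s,t)=k(s,t)$ (hence is a $\Gamma$-braid move and preserves $\Gamma$-reducedness) correctly yields both that $\Gamma$-reduced words are $\Gamma'$-reduced (via Tits' word-problem criterion) and that your map $\Phi$ is injective (via Matsumoto's theorem). Together these give $a_\ell\le a'_\ell$ for all $\ell$ and hence $\tau(\Gamma,S)\le\tau(\Gamma',S')$. Terragni in fact proves the stronger coefficient-wise domination of growth series, which your argument also establishes; your route through a length-preserving injection built on Tits/Matsumoto is a natural way to do this and, to the best of my knowledge, is in the same spirit as the cited proof. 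One small stylistic remark: the phrase ``some sequence of $\Gamma'$-braid moves would shorten $w$'' should read ``would bring $w$ to a word containing $ss$'' — braid moves themselves preserve length, and the shortening comes from the subsequent deletion; the substance of the argument is unaffected.
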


For a finitely generated  group $\Gamma$ with ordered finite generating set $S$ with $\#S=N$,
we call the pair $(\Gamma, S)$ a \textit{$N$-marked group}.
Given two $N$-marked groups $(\Gamma, S)$ and $(\Gamma', S')$
we say that they are isomorphic as marked groups when the map $\iota:S\to S'$ sending $s_i$ to $s'_i$ extends to a group isomorphism between $\Gamma$ and $\Gamma'$.
The \emph{space of $N$-marked groups} is the set of isomorphism classes of $N$-marked groups equipped with a metric topology,  given by the Chabauty-Grigorchuk topology; see \cite{Grigorchuk84}.
Let us denote by $\mathcal{C}_N$ the set of marked Coxeter systems of rank $N$.
In \cite{Yukita20},
the second author studied the space $\mathcal{C}_N$ and showed that $\mathcal{C}_N$ is compact.

\begin{thm}[Theorem 3.2, Theorem 3.5 \cite{Yukita20}]\label{thm:2.3}
Let $\{(\Gamma_n, S_n)\}$ and $(\Gamma, S)$ be marked Coxeter systems of rank $N$.
We write $k_{ij}(n)$ (resp. $k_{ij}$) for the order of $s_i(n)s_j(n)$ in $\Gamma_n$ (resp. the order of $s_is_j$ in $\Gamma$).
\begin{itemize}
\item[(1)]
The sequence $\{(\Gamma_n, S_n)\}$ converges to $(\Gamma, S)$ if and only if $\displaystyle \lim_{n\to \infty}k_{ij}(n)=k_{ij}$ for $1\leq i,j \leq N$.
\item[(2)]
If $\displaystyle \lim_{n\to \infty}(\Gamma_n, S_n)=(\Gamma, S)$,
then $\displaystyle \lim_{n\to \infty}\tau(\Gamma_n, S_n)=\tau(\Gamma, S)$.
\end{itemize}
\end{thm}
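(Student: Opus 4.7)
For part~(1), the plan is to unpack the Chabauty-Grigorchuk topology on $\mathcal{C}_N$: two marked groups lie in the same basic $R$-neighborhood precisely when the sets of words of length at most $R$ in the free group $F_N$ that evaluate to the identity coincide under the two markings. For the forward implication, applying this condition to the family $w_k = (s_is_j)^k$ identifies $k_{ij}$ as the smallest $k \in \N \cup \{\infty\}$ for which $w_k$ is trivial, so $k_{ij}(n) \to k_{ij}$. For the reverse implication, I would invoke Tits' solution to the word problem in Coxeter groups: a word of length $\leq R$ is trivial iff it can be reduced to the empty word by a sequence of cancellations $s_i^2 = 1$ and braid moves, with all intermediate words of length $\leq R$. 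Such braid moves involve only pairs $(i,j)$ with $k_{ij} \leq R$, so once $n$ is large enough that $k_{ij}(n) = k_{ij}$ on those pairs and $k_{ij}(n) > R$ on the remaining ones, the admissible reductions coincide in $\Gamma$ and $\Gamma_n$, and triviality of any length-$\leq R$ word agrees.

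For part~(2), I would combine McMullen's partial order with Steinberg's formula to sandwich the limit. Using (1), for $n$ large we have $k_{ij}(n) \leq k_{ij}$ for every pair (with equality when $k_{ij} < \infty$ and trivially when $k_{ij} = \infty$), so $(\Gamma_n, S_n) \preceq (\Gamma, S)$ and Theorem~\ref{thm:2.2} gives $\limsup_n \tau(\Gamma_n, S_n) \leq \tau(\Gamma, S)$. For the matching lower bound, for each $R \in \N$ define the truncated system $(\Gamma^R, S^R)$ on the same generating set with $k^R_{ij} := \min(k_{ij}, R)$. For $n$ large depending on $R$, we have $k^R_{ij} \leq k_{ij}(n)$, hence $(\Gamma^R, S^R) \preceq (\Gamma_n, S_n)$ and $\tau(\Gamma^R, S^R) \leq \liminf_n \tau(\Gamma_n, S_n)$. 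The problem thus reduces to showing $\tau(\Gamma^R, S^R) \to \tau(\Gamma, S)$ as $R \to \infty$, with the upper bound $\tau(\Gamma^R, S^R) \leq \tau(\Gamma, S)$ already given by Theorem~\ref{thm:2.2}.

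For this last step, I would expand both reciprocals via Steinberg's formula~\eqref{eq:2-1}. The classification of finite Coxeter diagrams shows that no rank-$\geq 3$ spherical diagram has an edge labeled greater than $5$; hence for $R \geq 6$ the spherical subsets of $\Gamma^R$ that are not spherical in $\Gamma$ are exactly the rank-$2$ pairs $\{s_i, s_j\}$ with $k_{ij} = \infty$, each contributing $1/([2][R])$ to $1/f_{(\Gamma^R, S^R)}(z^{-1})$. Because $[R](z) = (z^R - 1)/(z-1)$ diverges uniformly on compact subsets of $\{|z|>1\}$, these extra summands vanish uniformly there, while the remaining summands coincide with those of $1/f_{(\Gamma, S)}(z^{-1})$. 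Hurwitz's theorem on zeros of uniformly convergent holomorphic functions, applied near the simple root at $\tau(\Gamma, S)$, then forces a zero of $1/f_{(\Gamma^R, S^R)}(z^{-1})$ arbitrarily close to $\tau(\Gamma, S)$; combined with the upper bound $\tau(\Gamma^R, S^R) \leq \tau(\Gamma, S)$ this yields $\tau(\Gamma^R, S^R) \to \tau(\Gamma, S)$. The delicate step is handling these ``new'' rank-$2$ contributions and ruling out a spurious dominant root as $R$ grows; the explicit decay $[R](z)^{-1} = O(z^{-(R-1)})$ for $z > 1$, together with Hurwitz on a fixed annulus around $\tau(\Gamma, S)$, is what makes it work.
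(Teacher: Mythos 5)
The paper does not prove Theorem~\ref{thm:2.3}: it is imported verbatim from \cite{Yukita20} (Theorem~3.2 and Theorem~3.5 there) and used as a black box, so there is no internal proof to compare your plan against. Evaluating your plan on its own terms: for part~(1), the Chabauty--Grigorchuk characterization of basic neighborhoods (agreement of the trivial words of length at most $R$ in $F_N$) is the right framework, the forward direction via the relator family $(s_is_j)^k$ is correct, and the reverse direction via Tits' solution of the word problem is sound, since Tits' algorithm never increases word length and a braid move of type $m_{ij}$ cannot fire inside a word of length at most $R$ unless $m_{ij}\le R$. For part~(2), the sandwich via McMullen's order together with the truncations $(\Gamma^R,S^R)$ is a reasonable route, and the Hurwitz argument on a small disk around $\tau(\Gamma,S)$ is legitimate because all poles of $1/f(z^{-1})$ lie on the unit circle, so the reciprocals are holomorphic on $\{\lvert z\rvert>1\}$ and the uniform convergence you describe does hold there.

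Some details in part~(2) need to be tightened rather than merely flagged. First, the Hurwitz step requires $\tau(\Gamma,S)>1$, i.e.\ the non-spherical, non-affine case; in the spherical and affine cases all $k_{ij}$ are finite, so $\Gamma^R=\Gamma$ once $R$ exceeds $\max k_{ij}$ and the statement is vacuous, but this case split must be made explicit. Second, your claim that for $R\ge 6$ the only spherical parabolics of $\Gamma^R$ not already spherical in $\Gamma$ are the rank-$2$ pairs with $k_{ij}=\infty$ additionally needs $R$ to exceed every \emph{finite} $k_{ij}$ (a finite bound since $S$ is finite), since otherwise $\min(k_{ij},R)$ also alters existing dihedral contributions in Steinberg's formula. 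Third, you should state explicitly that Hurwitz produces a zero $\zeta_R$ of the numerator $P^R(z)$ with $\lvert\zeta_R\rvert\ge \tau(\Gamma,S)-\epsilon$, and that $\tau(\Gamma^R,S^R)$ dominates $\lvert\zeta_R\rvert$ because it is defined as the root of $P^R$ of maximal modulus; this is how the lower bound is actually closed. With those points filled in, your plan is a plausible reconstruction of the continuity argument one would expect in \cite{Yukita20}, though I cannot certify that it matches the cited proof.
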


\section{Growth rates of Coxeter systems with vanishing Euler characteristic}\label{Section3}
Let $(\Gamma, S)$ be a Coxeter system of dimension at most $2$ such that $\chi(L(\Gamma, S))=0$,
where $L(\Gamma, S)$ denotes the geometric realization of its nerve.
In this section, we prove that the growth rate $\tau(\Gamma, S)$ is a Salem number.

\vspace{2mm}
We write $N$ (resp. $E$) for the number of vertices (resp. edges) of the presentation diagram $X(\Gamma, S)$.
Recall that the Euler characteristic of a graph is the number of vertices minus the number of edges.
Since the dimension of $(\Gamma, S)$ is at most $2$,
the underlying graph of $X(\Gamma, S)$ coincides with $L(\Gamma, S)$, and hence $N=E$.
Suppose that the set of labels of the edges of $X(\Gamma, S)$ is $\{k_1,\ldots, k_r\}$.
Let us denote by $E_i$ the number of edges of $X(\Gamma, S)$ labeled by $k_i$.
We obtain the following equality by Steinberg's formula \eqref{eq:2-1}; see also \cite[p.413]{Parry93}.
\begin{align*}
\dfrac{1}{f_{(\Gamma, S)}(z^{-1})}
&=1-\dfrac{N}{[2]}+\sum_{i=1}^r \dfrac{E_i}{[2, k_i]}
=1-\dfrac{E_1+\cdots+E_r}{[2]}+\sum_{i=1}^r \dfrac{E_i}{[2, k_i]}\\
&=1+\sum_{i=1}^r \dfrac{E_i}{[2]}\left(\dfrac{1}{[k_i]}-1\right)
=1+\sum_{i=1}^r \dfrac{E_i}{[2]}\left(\dfrac{z-1}{z^{k_i}-1}-1\right)\\
&=1+\sum_{i=1}^rE_i\dfrac{z-z^{k_i}}{(z+1)(z^{k_i}-1)}.
\end{align*}
Hence
\begin{equation}
\dfrac{z+1}{(z-1) f_{(\Gamma, S)}(z^{-1})}=\dfrac{z+1}{z-1}+\sum_{i=1}^r E_i\dfrac{z-z^{k_i}}{(z-1) (z^{k_i}-1)}.\label{eq:3-2}
\end{equation}

The following lemma is fundamental for the proof.

\begin{lem}[Corollary 1.8 \cite{Parry93}]\label{lem:3.1}
Given integers $k_1,\ldots, k_r\geq 2$ and $E_1,\ldots, E_r\geq 1$ suppose that 
\begin{equation}
\sum_{i=1}^r \left(1-\dfrac{1}{k_i}\right)E_i>2.\label{eq:3-1}
\end{equation}
Let $R(z)$ be the rational function defined by
\begin{equation*}
R(z)=\dfrac{z+1}{z-1}+\sum_{i=1}^r E_k\dfrac{z-z^{k_i}}{(z-1)(z^{k_i}-1)}.
\end{equation*}
Suppose that $P(z)$ and $Q(z)$ are relatively prime monic polynomials with integer coefficients such that $R(z)=P(z)/Q(z)$.
Then,
$P(z)$ and $Q(z)$ are monic polynomials and equal degrees,
and $P(z)$ is a product of distinct irreducible cyclotomic polynomials and exactly one Salem polynomial.
\end{lem}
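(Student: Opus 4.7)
My plan is to prove the lemma in two phases: first establish the structural properties of the rational function $R(z)$ (reciprocality and the equal-degree claim), then locate the roots of the numerator $P(z)$.

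For the structural phase, I would first check that $R(z^{-1}) = -R(z)$ by verifying the identity summand by summand. Both $(z+1)/(z-1)$ and each $(z - z^{k_i})/[(z-1)(z^{k_i}-1)]$ flip sign under $z \mapsto 1/z$, as a direct substitution shows (clear the fractions by multiplying numerator and denominator by the appropriate power of $z$). Evaluation at infinity gives $R(\infty) = 1$, since the first term tends to $1$ and the remaining summands behave like $-1/z$ and tend to $0$. Writing $R = P/Q$ in lowest terms with $P, Q$ monic integer polynomials, the limit at infinity forces $\deg P = \deg Q$, and the symmetry $R(z^{-1}) = -R(z)$ translates into the polynomial identity $P^*(z)\,Q(z) = -Q^*(z)\,P(z)$, where $P^*(z) := z^{\deg P} P(1/z)$ denotes the reciprocal polynomial. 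Coprimality of $P$ and $Q$ then forces $P^* = \pm P$ with $P(0) = \pm 1$, so $P$ is (anti-)reciprocal and its roots are invariant under $\alpha \mapsto 1/\alpha$.

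For the root-location phase, I would compute the residue of $R$ at the pole $z = 1$. Expanding each summand in powers of $(z-1)$ gives a simple pole with residue $2 - \sum_{i=1}^r E_i\,(1 - 1/k_i)$, which is strictly negative by hypothesis \eqref{eq:3-1}. Hence $R(z) \to -\infty$ as $z \to 1^+$ while $R(z) \to 1$ as $z \to \infty$, so the intermediate value theorem produces a real zero $\tau > 1$ of $P$. Uniqueness of $\tau$ in $(1,\infty)$ follows from strict monotonicity of $R$ on this interval, which I would verify by differentiating each summand and checking positivity (using the expression $(z - z^{k_i})/[(z-1)(z^{k_i}-1)] = -z[k_i-1]/((z-1)[k_i])$). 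By the $z \mapsto 1/z$ symmetry, $1/\tau$ is then the unique zero in $(0,1)$.

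The main obstacle is to show that every other root of $P$ lies on the unit circle, which isolates $\{\tau, 1/\tau\}$ as the only zeros off $|z| = 1$. I would follow Parry's original approach: combining the terms of $R(z)$ over the common denominator $(z-1)\prod_i (z^{k_i}-1)$, the candidate poles of $R$ are already roots of unity, and a Rouch\'e / argument-principle count on a contour slightly outside the unit disk, using the hyperbolicity estimate \eqref{eq:3-1} to obtain the required strict inequality on the contour, confirms that $\tau$ is the only zero of $P$ in $|z| > 1$. Once the roots of $P$ are localized in this way, Kronecker's theorem identifies the unit-circle roots of $P$ as roots of unity, whose minimal polynomials are irreducible cyclotomic. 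Coprimality of $P$ and $Q$ ensures these cyclotomic factors appear without multiplicity, and the reciprocal pair $\{\tau, 1/\tau\}$ together with any additional unit-circle roots in its Galois orbit assembles into exactly one irreducible Salem polynomial, giving the claimed factorization $P = S \cdot \prod_j \Phi_{n_j}$.
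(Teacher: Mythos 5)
The paper does not supply a proof of this lemma at all; it is imported verbatim as Corollary~1.8 of Parry's paper~\cite{Parry93} and used as a black box, so there is no argument in the paper to compare against.

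Judged on its own terms, your structural phase is sound: the identities $R(1/z)=-R(z)$ and $R(\infty)=1$ check out, forcing $\deg P=\deg Q$ and (via coprimality and $P(0)\neq 0$, since $R(0)=-1$) the relation $P^*=\pm P$, so $P$ is (anti-)reciprocal; and the residue of $R$ at $z=1$ is indeed $2-\sum E_i(1-1/k_i)<0$, producing a real zero $\tau>1$ by the intermediate value theorem. Two genuine gaps remain, however. First, the stated route to uniqueness of $\tau$ in $(1,\infty)$ --- ``differentiating each summand and checking positivity'' --- does not work term-by-term: $(z+1)/(z-1)=1+2/(z-1)$ is \emph{strictly decreasing} on $(1,\infty)$, while each $-z[k_i-1]/((z-1)[k_i])$ is increasing toward $0$, so $R'>0$ would require a genuine cancellation estimate rather than a summand-by-summand sign check, and you give none. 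Second, and more importantly, the heart of the lemma --- that every zero of $P$ other than $\tau$, $1/\tau$ lies on $|z|=1$, that these zeros are simple, and hence that the cyclotomic factors are distinct --- is deferred entirely to ``Parry's original approach,'' which you describe as a Rouch\'e estimate on a contour slightly outside the unit disk using the hyperbolicity inequality. That description is inaccurate as a proof strategy and does not close the argument. Parry's mechanism is different: the anti-reciprocality $R(1/z)=-R(z)$ together with $\overline{R(z)}=R(\bar z)$ forces $R$ to take purely imaginary values on $|z|=1$; the poles of the real function $\rho(\theta):=-iR(e^{i\theta})$ occur only at roots of unity (the zeros of $(z-1)\prod_i(z^{k_i}-1)$), and an interlacing/sign-change count of $\rho$ between consecutive poles, anchored by the sign of the residue at $z=1$ coming from~\eqref{eq:3-1}, locates and counts the zeros of $P$ on the circle and shows they are simple, which is also what yields distinctness of the cyclotomic factors. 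Without carrying out this counting step, the proposal establishes the reciprocal structure and the existence of $\tau$, but not the Salem-plus-distinct-cyclotomics factorization that is the actual content of the lemma.
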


\begin{thm}\label{thm:3.2}
Let $(\Gamma, S)$ be a non-spherical, non-affine Coxeter system of dimension at most $2$.
If $\chi(L(\Gamma, S))=0$,
then the growth rate $\tau(\Gamma, S)$ is a Salem number.
\end{thm}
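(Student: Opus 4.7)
The plan is to apply Lemma \ref{lem:3.1} to the rational function
\[ R(z) = \dfrac{z+1}{(z-1) f_{(\Gamma, S)}(z^{-1})} \]
that appears in equation \eqref{eq:3-2}. Since $\chi(L(\Gamma, S)) = 0$ and $\dim L(\Gamma, S) \leq 1$, the nerve is a graph with equal numbers of vertices and edges, so $N = E = \sum_{i=1}^r E_i$. The proof splits into three steps: verify the hypothesis \eqref{eq:3-1} of Lemma \ref{lem:3.1}, apply the lemma, and identify $\tau(\Gamma, S)$ with the Salem root of the resulting numerator.

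The main step is to check that $\sum_{i=1}^r (1 - 1/k_i) E_i > 2$. Since $1 - 1/k_i \geq 1/2$, the sum is at least $N/2$, so the inequality is automatic once $N \geq 5$. The non-spherical, non-affine hypothesis forces $N \geq 3$, so the remaining ranks are $N = 3$ and $N = 4$. For $N = 3$, we have $E = 3$ and $(\Gamma, S)$ is a triangle group $(p, q, r)$; non-sphericity and non-affineness give $1/p + 1/q + 1/r < 1$ and hence $\sum (1 - 1/k_i) = 3 - \sum 1/k_i > 2$. For $N = 4$ with $E = 4$, the only simple connected graphs are the $4$-cycle and a triangle with a pendant edge. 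In the $4$-cycle case, labels all equal to $2$ produce $\tilde A_1 \times \tilde A_1$, which is affine and thus excluded; otherwise at least one label is $\geq 3$, so $\sum (1 - 1/k_i) \geq 3/2 + 2/3 = 13/6 > 2$. In the triangle-with-pendant case, $\dim L(\Gamma, S) \leq 1$ forces the triangle subgroup to be infinite ($1/p + 1/q + 1/r \leq 1$), giving $\sum (1 - 1/k_i) \geq 2 + 1/2 > 2$.

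Having verified \eqref{eq:3-1}, I would write $R(z) = P(z)/Q(z)$ in lowest terms with $P, Q$ coprime monic integer polynomials. Lemma \ref{lem:3.1} then states that $P(z)$ is a product of distinct irreducible cyclotomic polynomials and exactly one Salem polynomial $S(z)$; let $\tau_S > 1$ be the corresponding Salem root. To identify $\tau(\Gamma, S)$ with $\tau_S$, rewrite
\[ \dfrac{1}{f_{(\Gamma, S)}(z^{-1})} = \dfrac{(z-1) P(z)}{(z+1) Q(z)}. \]
The growth rate is the largest-modulus root of this fraction's numerator after reduction to lowest terms. Since $S(z)$ is irreducible and coprime to $Q(z)$, the Salem factor survives reduction unscathed, so $\tau_S$ remains a root. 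Every other surviving root comes either from $(z-1)$ or from a cyclotomic factor of $P(z)$ and thus has modulus at most $1$. As $(\Gamma, S)$ is non-affine we have $\tau(\Gamma, S) > 1$, so $\tau(\Gamma, S) = \tau_S$ is a Salem number.

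The main obstacle is Step 1: verifying \eqref{eq:3-1} for the small ranks $N = 3$ and $N = 4$. The large-$N$ estimate $N/2 > 2$ is elementary, but the small-rank cases genuinely invoke the non-spherical, non-affine and $\dim \leq 2$ assumptions; in particular, the $4$-cycle subcase relies on the explicit identification of the all-$2$ labeling with the affine system $\tilde A_1 \times \tilde A_1$.
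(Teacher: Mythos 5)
Your proposal is correct and follows essentially the same route as the paper: reduce to equation \eqref{eq:3-2}, verify Parry's hypothesis \eqref{eq:3-1} by a case split on $N$ (with $N\geq 5$ automatic and $N=3,4$ handled using the non-spherical, non-affine and $\dim\leq 2$ hypotheses), then invoke Lemma~\ref{lem:3.1}. The only cosmetic differences are that in the paw-graph subcase you bound the triangle contribution by $2$ via $1/p+1/q+1/r\leq 1$ rather than by locating a single label $\geq 3$ as the paper does, and you make explicit the final identification of $\tau(\Gamma,S)$ with the Salem root of $P(z)$, which the paper leaves implicit; neither change affects the substance of the argument.
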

\begin{proof}
We apply Lemma \ref{lem:3.1} to \eqref{eq:3-2}.
The proof is divided into three cases:
the cases $N=3$, $N=4$, and $N\geq 5$.

(i) Assume $N=3$.
By assumption,
we have $N=E=3$,
and hence the presentation diagram of $X(\Gamma, S)$ is as in Figure \ref{fig:figure3.1}.
\begin{figure}[htbp]
\centering
\includegraphics[scale=0.5]{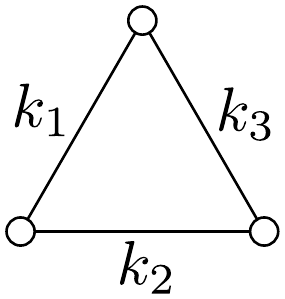}
\caption{The presentation diagram in the case $N=3$}
\label{fig:figure3.1}
\end{figure}

Since $(\Gamma, S)$ is non-spherical and non-affine,
we obtain that
\begin{equation*}
\dfrac{1}{k_1}+\dfrac{1}{k_2}+\dfrac{1}{k_3}<1.
\end{equation*}
Therefore,
\begin{equation*}
\left(1-\dfrac{1}{k_1}\right)+\left(1-\dfrac{1}{k_2}\right)+\left(1-\dfrac{1}{k_3}\right)>2.
\end{equation*}

\vspace{2mm}
(ii) Assume $N=4$.
The presentation diagram $X(\Gamma, S)$ is one of the diagrams in Figure \ref{fig:figure3.2}
We show that one of the labels of $X(\Gamma, S)$ is at least $3$.
\begin{figure}[htbp]
\centering
\includegraphics[scale=0.5]{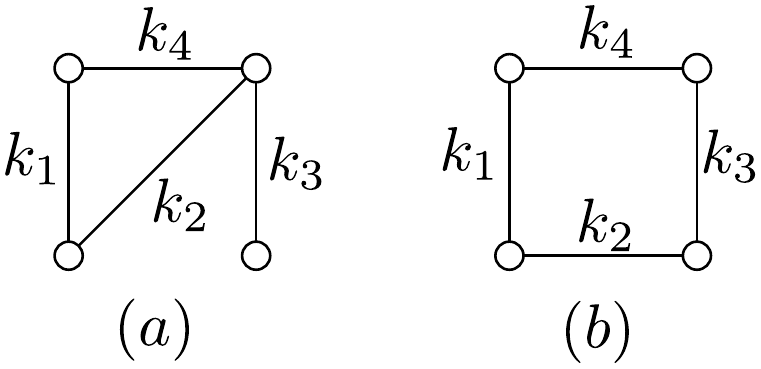}
\caption{The presentation diagrams in the case $N=4$}
\label{fig:figure3.2}
\end{figure}

Suppose that $X(\Gamma, S)$ is the diagram $(a)$.
If $k_1=k_2=k_4=2$,
then the vertices of the triangle generates a spherical parabolic subgroup of $\Gamma$ of rank $3$.
This contradicts our assumption that the dimension of $(\Gamma, S)$ is at most $2$.
Therefore,
one of the labels is at least $3$.
Suppose that $X(\Gamma, S)$ is the diagram $(b)$.
If $k_1=k_2=k_3=k_4=2$,
then the Coxeter diagram $\Cox{(\Gamma, S)}$ is made of two connected components $\widetilde{A}_ 1$ (see Figure \ref{fig:figure2.2} for $\widetilde{A}_1$).
This is a contradiction to the fact that $(\Gamma, S)$ is non-spherical and non-affine.
Therefore,
one of the labels is at least $3$.
Hence
\begin{equation*}
\left(1-\dfrac{1}{k_1}\right)+\left(1-\dfrac{1}{k_2}\right)+\left(1-\dfrac{1}{k_3}\right)+\left(1-\dfrac{1}{k_4}\right)\geq 3\left(1-\dfrac{1}{2}\right)+\left(1-\dfrac{1}{3}\right)>2.
\end{equation*}

(iii) Assume $N\geq 5$.
It follows that
\begin{equation*}
\sum_{i=1}^r \left(1-\dfrac{1}{k_i}\right)E_i=\sum_{i=1}^r E_i-\sum_{i=1}^r \dfrac{E_i}{k_i}=N-\sum_{i=1}^r \dfrac{E_i}{k_i}\geq N-\sum_{i=1}^r\dfrac{E_i}{2}=\dfrac{N}{2}\geq \dfrac{5}{2}>2.
\end{equation*}
Therefore,
\eqref{eq:3-1} holds, and the assertion follows from Lemma \ref{lem:3.1}.
\end{proof}
For later use,
we show the following.
\begin{lem}\label{lem:3.3}
Let $(\Gamma, S)$ be a non-spherical, non-affine Coxeter system of dimension at most $2$.
Suppose that the growth series $f_{(\Gamma, S)}(z)$ satisfies the following equality.
\begin{equation*}
\dfrac{1}{f_{(\Gamma, S)}(z^{-1})}=\dfrac{P(z)}{[2,k_1,\ldots,k_r]},
\end{equation*}
where $P(z)$ is a monic polynomial with integer coefficients.
If $\chi(L(\Gamma, S))=0$,
then $P(z)$ is a product of cyclotomic polynomials and exactly one Salem polynomial.
\end{lem}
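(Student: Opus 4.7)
The plan is to reduce Lemma \ref{lem:3.3} to Lemma \ref{lem:3.1} by carefully tracking the cyclotomic factors that get lost when one passes to lowest terms. Using the hypothesis together with the identity $[2, k_1, \ldots, k_r] = (z+1) \prod_{i=1}^r [k_i]$, I would first rewrite the left-hand side of \eqref{eq:3-2} as
\begin{equation*}
R(z) := \frac{z+1}{(z-1)\, f_{(\Gamma, S)}(z^{-1})} = \frac{P(z)}{(z-1)\prod_{i=1}^r [k_i]}.
\end{equation*}
Setting $Q(z) := (z-1)\prod_{i=1}^r [k_i]$, this exhibits $R(z) = P(z)/Q(z)$, although the fraction need not be in lowest terms.

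Next, I would verify that the hypotheses of Lemma \ref{lem:3.3} imply inequality \eqref{eq:3-1}; this is precisely the three-case analysis $N = 3$, $N = 4$, $N \geq 5$ already carried out in the proof of Theorem \ref{thm:3.2}, which uses only that $(\Gamma, S)$ is non-spherical, non-affine, of dimension at most $2$, and satisfies $\chi(L(\Gamma, S)) = 0$. Lemma \ref{lem:3.1} then yields a reduced representation $R(z) = P_0(z)/Q_0(z)$ with $\gcd(P_0, Q_0) = 1$, where $P_0$ is a product of distinct irreducible cyclotomic polynomials and exactly one Salem polynomial.

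Finally, the factorizations $z - 1 = \Phi_1(z)$ and $[k] = \prod_{d \mid k,\, d > 1} \Phi_d(z)$ show that $Q(z)$ is a product of cyclotomic polynomials, hence so is its divisor $Q_0$ and the quotient $g(z) := Q(z)/Q_0(z)$. Cross-multiplying the two expressions for $R(z)$ yields $P(z) = P_0(z) \cdot g(z)$, so $P(z)$ inherits from $P_0$ exactly one Salem factor while all other factors (from $P_0$ and from $g$) are cyclotomic, as claimed. The argument is essentially bookkeeping beyond the invocation of Lemma \ref{lem:3.1}; the only substantive point is recognising that the extra factor $g(z)$ introduced by not reducing the fraction is itself cyclotomic, so I do not foresee any real obstacle beyond this routine verification.
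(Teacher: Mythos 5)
Your proof is correct and follows essentially the same route as the paper: both reduce to Lemma \ref{lem:3.1} via equation \eqref{eq:3-2}, verify condition \eqref{eq:3-1} by the case analysis from Theorem \ref{thm:3.2}, and then compare the hypothesized factorization with the reduced one $P_0/Q_0$ using the fact that $(z-1)[k_1]\cdots[k_r]$ is a product of cyclotomic polynomials. Your explicit identification of the extra cyclotomic cofactor $g(z)=Q(z)/Q_0(z)$ just makes the paper's terse final deduction more transparent.
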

\begin{proof}
As in the proof of Theorem \ref{thm:3.2},
we apply Lemma \ref{lem:3.1} to \eqref{eq:3-2}:
\begin{equation*}
\dfrac{z+1}{(z-1)f_{(\Gamma, S)}(z^{-1})}=\dfrac{P_0(z)}{Q_0(z)},
\end{equation*}
where $P_0(z)$ and $Q_0(z)$ are the relatively prime polynomials with integer coefficients satisfying the followings.
\begin{itemize}
\item[(i)] The polynomials $P_0$ and $Q_0$ have same degree $d$.
\item[(ii)] The polynomial $P_0$ is a product of distinct irreducible cyclotomic polynomials and exactly one Salem polynomial.
\end{itemize}
By assumption,
we have 
\begin{equation}
\dfrac{P(z)}{[2,k_1,\ldots,k_r]}=\dfrac{(z-1)P_0(z)}{(z+1)Q_0(z)}.\label{eq:3-3}
\end{equation}
Since every factor of the polynomial $[2, k_1,\ldots,k_r]$ is a cyclotomic polynomial,
the equality \eqref{eq:3-3} implies that $P(z)$ is a product of cyclotomic polynomials and exactly one Salem polynomial.
\end{proof}

\section{Growth rates of Coxeter systems with positive Euler characteristic}\label{Section4}
Let $(\Gamma, S)$ be a Coxeter system of dimension at most $2$ such that $\chi(L(\Gamma, S))\geq 1$,
where $L(\Gamma, S)$ denotes the geometric realization of its nerve.
Recall that $\chi(L(\Gamma, S))$ equals the Euler characteristic of the underlying graph of $X(\Gamma, S)$.
In this section, we prove that the growth rate $\tau(\Gamma, S)$ is a Pisot number.

\begin{lem}\label{lem:4.1}
Let $(\Gamma, S)$ be a non-spherical, non-affine marked Coxeter system of dimension at most $2$ and rank $N$.
Suppose that either the presentation diagram $X(\Gamma, S)$ is disconnected, 
or has an edge labeled by $k\geq 3$.
If $\chi(L(\Gamma, S))\geq 1$,
then there exists a sequence of marked Coxeter systems $\{(\Gamma_n, S_n)\}_{n\geq 7}$ of rank $N$ such that for $n\geq 7$,
the following properties hold.
\begin{itemize}
\item[(1)] $(\Gamma_n, S_n)\preceq{(\Gamma_{n+1}, S_{n+1})}\preceq{(\Gamma, S)}$
\item[(2)] $\dim{(\Gamma_n, S_n)}\leq 2$
\item[(3)] $\chi(L(\Gamma_n, S_n))=\chi(L(\Gamma, S))-1$
\item[(4)] The sequence $\{(\Gamma_n, S_n)\}_{n\geq 7}$ converges to $(\Gamma, S)$ in the space $\mathcal{C}_N$ of marked Coxeter systems of rank $N$.
\end{itemize}
\end{lem}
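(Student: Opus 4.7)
The plan is to build $(\Gamma_n, S_n)$ by modifying exactly one entry of the Coxeter matrix of $(\Gamma, S)$. Concretely, I will select indices $(i_0, j_0)$ with $k_{i_0 j_0} = \infty$ in $(\Gamma, S)$ and set
\begin{equation*}
k_{i_0 j_0}(n) = n \quad \text{for } n \geq 7, \qquad k_{ij}(n) = k_{ij} \text{ for all other pairs.}
\end{equation*}
With this definition, conditions (1), (3), (4) become essentially automatic: the ordering $\preceq$ reduces to $n \leq n+1 \leq \infty$; the presentation diagram $X(\Gamma_n, S_n)$ acquires exactly one extra edge, so $\chi(L(\Gamma_n, S_n)) = \chi(L(\Gamma, S)) - 1$; and $k_{i_0 j_0}(n) = n \to \infty = k_{i_0 j_0}$, combined with the constancy of all other labels, gives convergence in $\mathcal{C}_N$ by Theorem \ref{thm:2.3}(1). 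The substantive content of the lemma is therefore to choose $(i_0, j_0)$ so that condition (2), namely $\dim(\Gamma_n, S_n) \leq 2$, is preserved.

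Since $\dim(\Gamma, S) \leq 2$, the only triples of $S$ that can acquire a finite parabolic subgroup in $(\Gamma_n, S_n)$ are those of the form $\{s_{i_0}, s_{j_0}, s_k\}$. For such a triple to be spherical, all three pairwise orders must be finite, so $s_k$ must be a common neighbor of $s_{i_0}$ and $s_{j_0}$ in $X(\Gamma, S)$, and the resulting rank-$3$ Coxeter diagram must appear in the classification of finite Coxeter groups. Inspecting that classification, the only rank-$3$ spherical type featuring an edge labeled $m \geq 6$ is $A_1 \times I_2(m)$, which forces the remaining two pairwise orders to be $2$. Consequently, for $n \geq 7$, the triple $\{s_{i_0}, s_{j_0}, s_k\}$ is spherical in $(\Gamma_n, S_n)$ if and only if $k_{i_0 k} = k_{j_0 k} = 2$. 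My task thus reduces to selecting $(i_0, j_0)$ non-adjacent in $X(\Gamma, S)$ such that no $s_k \in S$ satisfies both $k_{i_0 k} = 2$ and $k_{j_0 k} = 2$.

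In Case 1, where $X(\Gamma, S)$ is disconnected, I take $s_{i_0}$ and $s_{j_0}$ in distinct connected components; any vertex adjacent in $X(\Gamma, S)$ to both would connect those components, so no such $s_k$ exists. In Case 2, $X(\Gamma, S)$ is connected and carries at least one edge labeled $\geq 3$. The identity $\chi(L(\Gamma, S)) = N - E \geq 1$ together with connectedness forces $E = N - 1$, so $X(\Gamma, S)$ is a tree. Split according to its diameter. If the diameter is at least $3$, I pick $s_{i_0}, s_{j_0}$ at tree-distance at least $3$, and no common neighbor can exist (in a tree, two vertices share a neighbor only when they are at distance $2$). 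Otherwise the tree has diameter $2$ and is therefore a star with some center $s_0$; the hypothesis then supplies a leaf $s_{i_0}$ with $k_{0 i_0} \geq 3$, and taking $s_{j_0}$ to be any other leaf, the unique common neighbor $s_0$ satisfies $k_{0 i_0} \geq 3 \neq 2$.

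I expect the main difficulty to lie in this case analysis, particularly the star subcase, where the hypothesis ``an edge labeled $\geq 3$'' is indispensable: without it, any two leaves would share $s_0$ as a common neighbor with both labels equal to $2$, and adding any edge would promote some rank-$3$ parabolic to type $A_1 \times I_2(n)$, breaking the dimension bound. Once the pair $(i_0, j_0)$ has been exhibited, the properties (1), (3), (4) are routine verifications from the construction and Theorem \ref{thm:2.3}(1).
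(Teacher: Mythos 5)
Your proposal is correct and follows the same overall strategy as the paper: promote exactly one $\infty$-label to $n$, reducing everything to the existence of a suitable non-adjacent pair. In the disconnected case you pick vertices in distinct components, exactly as the paper does. In the connected case you first argue (as the paper does) that $\chi\geq 1$ and connectedness force $E=N-1$, so $X(\Gamma,S)$ is a tree, but then you diverge slightly: you split by diameter, choosing any pair at tree-distance $\geq 3$ when the diameter is $\geq 3$ (no common neighbor, so no new triangle at all), and falling back to the pair consisting of a $\geq 3$-labeled leaf and another leaf only in the star case. The paper instead runs a single uniform construction in the connected case: take the edge $e=\{s_p,s_q\}$ with $k_{pq}\geq 3$, take an adjacent edge $e'=\{s_q,s_r\}$, and add the closing edge $\{s_p,s_r\}$; the unique new $3$-cycle has labels $k_{pq}\geq 3$, $k_{qr}\geq 2$, $n\geq 7$, which cannot be a spherical rank-$3$ diagram. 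Both arguments are sound, and your explicit classification-based criterion (``a new spherical triple can only be of type $A_1\times I_2(n)$, forcing the other two labels to be $2$'') is a nice crystallization of what the paper verifies by inspection against Figure \ref{fig:figure2.3}. One small point worth making explicit: your diameter dichotomy silently uses $N\geq 3$ (which holds since rank-$\leq 2$ Coxeter systems are spherical or affine) to rule out diameter $\leq 1$, and property (3) is automatic only \emph{after} (2) is established, since the identity $\chi(L)=N-E$ for the nerve requires $\dim L\leq 1$.
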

\begin{proof}
Set $S=\{s_1,\ldots,s_N\}$.
We denote by $E$ and $k_{ij}$ the number of edges of $X(\Gamma, S)$ and the order of the product $s_is_j$, respectively.

\vspace{2mm}
Suppose first that the underlying graph of the presentation diagram $X(\Gamma, S)$ is disconnected.
Let $s_p$ and $s_q$ be two vertices of different connected components of the underlying graph of $X(\Gamma, S)$.
It follows that $k_{pq}=\infty$.
For $n\geq 7$,
we define a marked Coxeter system $(\Gamma_n, S_n)$ of rank $N$ by the following presentation
\begin{equation*}
\Gamma_n=\grouppresentation{s_1(n),\ldots,s_N(n)}{(s_i(n)s_j(n))^{k_{ij}(n)}=1\text{ for }1\leq i, j\leq N},
\end{equation*}
where $k_{ij}(n)=\begin{cases} n & \text{if }\{i, j\}=\{p, q\}\\ k_{ij} & \text{otherwise}\end{cases}$.

We show that $(\Gamma_n, S_n)$ satisfies the desired properties.
For $1\leq i,j\leq N$ and $n\geq 7$, we have
$k_{ij}(n)\leq k_{ij}(n+1)\leq k_{ij}$,
so that 
\begin{equation*}
(\Gamma_n, S_n)\preceq{(\Gamma_{n+1}, S_{n+1})}\preceq{(\Gamma, S)} \ .
\end{equation*}
In order to show that $\dim{(\Gamma_n, S_n)}\leq 2$,
it is sufficient to see that the presentation diagram $X(\Gamma_n, S_n)$ does not contain any of the diagrams depicted in Figure \ref{fig:figure2.3}.
Since $\dim{(\Gamma, S)}\leq 2$,
no such diagram is contained in $X(\Gamma, S)$.
The presentation diagram $X(\Gamma_n, S_n)$ is obtained from $X(\Gamma, S)$ by adding an edge between $s_p$ and $s_q$ labeled by $n$ (see Figure \ref{fig:figure4.1}).
In Figure \ref{fig:figure4.1},
we do not put labels of the edges other than the added edge for simplicity.
Since the vertices $s_p$ and $s_q$ lie in different connected components of the underlying graph of $X(\Gamma, S)$,
every cycle of the underlying graph of $X(\Gamma_n, S_n)$ comes from one of $X(\Gamma, S)$.
Hence we see that $X(\Gamma_n, S_n)$ does not contain any of the diagrams depicted in Figure \ref{fig:figure2.3}.
The Euler characteristics of the underlying graphs of $X(\Gamma_n, S_n)$ and $X(\Gamma, S)$ are equal to $\chi(L(\Gamma_n, S_n))$ and $\chi(L(\Gamma, S))$, respectively.
This observation implies that $\{(\Gamma_n, S_n)\}_{n\geq 7}$ satisfies the property (3).
By definition of $(\Gamma_n, S_n)$,
we have $\displaystyle \lim_{n\to \infty}k_{ij}(n)=k_{ij}$ for $1\leq i, j\leq N$.
Property (1) of Theorem \ref{thm:2.3} implies that $\{(\Gamma_n, S_n)\}_{n\geq 7}$ converges to $(\Gamma, S)$ in $\mathcal{C}_N$.
\begin{figure}[htbp]
\centering
\includegraphics[scale=0.5]{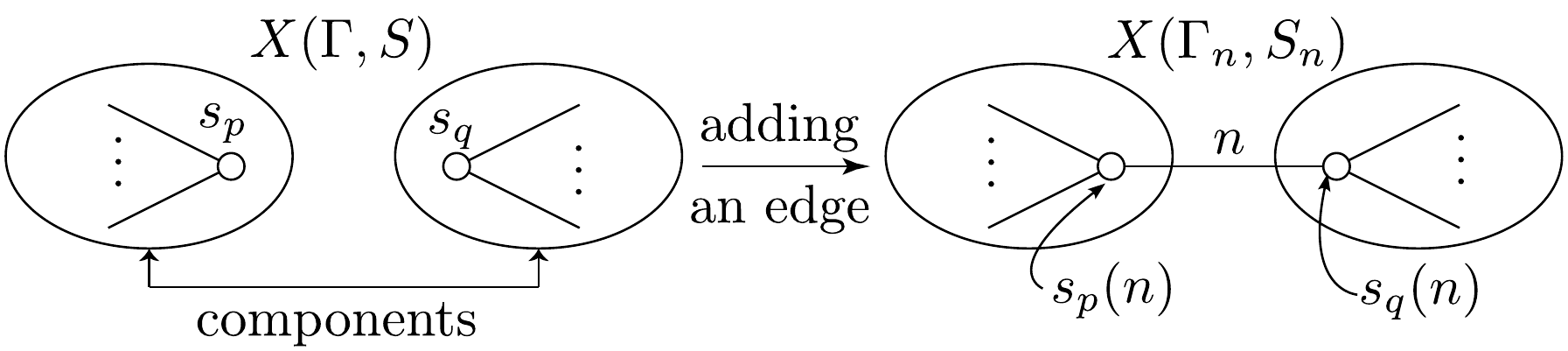}
\caption{Adding an edge between $s_p$ and $s_q$}
\label{fig:figure4.1}
\end{figure}

Suppose next that the underlying graph of $X(\Gamma, S)$ is connected,
and let us show that the underlying graph is a tree.
Since every connected graph with the Euler characteristic $1$ is a tree,
it is sufficient to show that $\chi(L(\Gamma, S))=1$.
By the connectivity of the underlying graph of $X(\Gamma, S)$, 
there exists a spanning tree $T$ of the graph.
We denote by $N_T$ and $E_T$ the number of vertices and of edges of $T$, respectively.
It follows that
$N=N_T$, $E_T\leq E$, and $N_T-E_T=1$.
Since $\chi(L(\Gamma, S))=N-E\geq 1$,
we have
\begin{equation*}
1\leq N-E\leq N-E_T=N_T-E_T=1,
\end{equation*}
and hence $\chi(L(\Gamma, S))=1$.

Since Coxeter systems of rank at most $2$ are spherical or affine,
our assumption implies that $N\geq 3$.
Also by assumption,
there exists an edge $e$ between vertices $s_p$ and $s_q$ of $X(\Gamma, S)$,
labeled by $k_{pq}\geq 3$.
Since the underlying graph of $X(\Gamma, S)$ is a tree with at least $3$ vertices,
we can find an edge $e'$ incident with $e$.
Without loss of generality we can assume that $e$ and $e'$ share the vertex $s_q$.
We write $s_r$ for the endpoint of $e'$ other than $s_q$.
Since the underlying graph of $X(\Gamma, S)$ is a tree,
the vertices $s_p$ and $s_r$ are not joined by an edge.
It follows that $k_{pr}=\infty$.
For $n\geq 7$,
we define a marked Coxeter system $(\Gamma_n, S_n)$ of rank $N$ by the following presentation
\begin{equation*}
\Gamma_n=\grouppresentation{s_1(n),\ldots,s_N(n)}{(s_i(n)s_j(n))^{k_{ij}(n)}=1\text{ for }1\leq i, j\leq N},
\end{equation*}
where $k_{ij}(n)=\begin{cases} n & \text{if }\{i, j\}=\{p, r\}\\ k_{ij} & \text{otherwise}\end{cases}$.

We show that $(\Gamma_n, S_n)$ satisfies the desired properties.
For $1\leq i,j\leq N$ and $n\geq 7$, we have
$k_{ij}(n)\leq k_{ij}(n+1)\leq k_{ij}$,
so that $(\Gamma_n, S_n)\preceq{(\Gamma_{n+1}, S_{n+1})}\preceq{(\Gamma, S)}$.
The presentation diagram $X(\Gamma_n, S_n)$ is obtained from $X(\Gamma, S)$ by adding an edge between $s_p$ and $s_r$ labeled by $n$ (see Figure \ref{fig:figure4.2}).
In Figure \ref{fig:figure4.2},
we do not put labels of the edges other than $3$ edges joining two of $s_p$, $s_q$, and $s_r$ for simplicity.
\begin{figure}[htbp]
\centering
\includegraphics[scale=0.5]{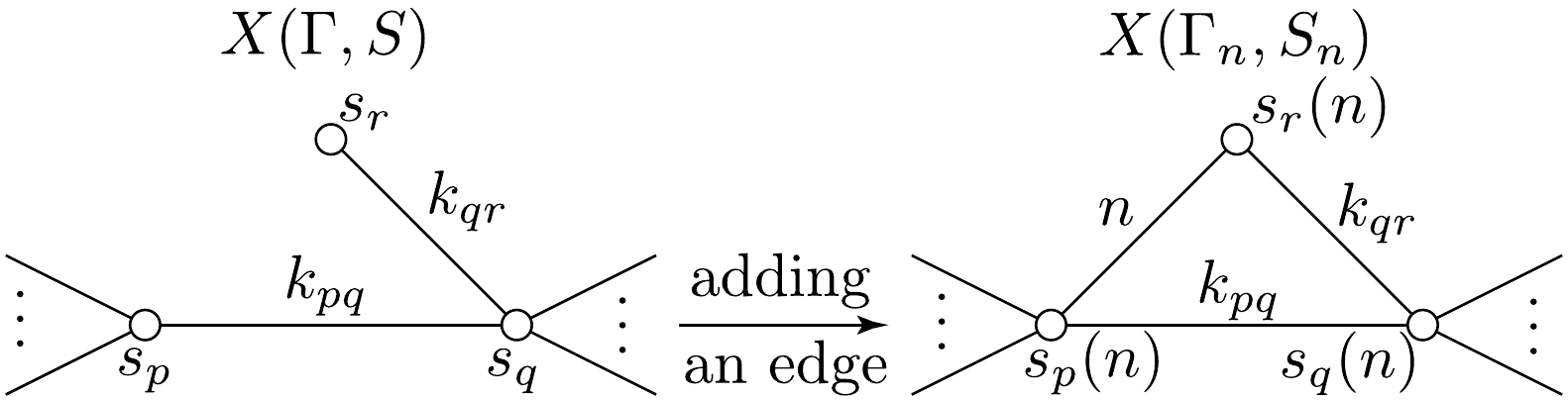}
\caption{Adding an edge between $s_p$ and $s_r$}
\label{fig:figure4.2}
\end{figure}

\noindent
Since the underlying graph of $X(\Gamma, S)$ is a tree,
the one of $X(\Gamma_n, S_n)$ has only one cycle and the cycle consists of $3$ edges joining two of $s_p$, $s_q$, and $s_r$.
Therefore,
the presentation diagram $X(\Gamma_n ,S_n)$ does not contain any of the diagrams in Figure \ref{fig:figure2.3}, which is due to the fact that $k_{pq}\geq 3$ and $n\geq 7$.
It follows that $\dim(\Gamma_n, S_n)\leq 2$.
The same reasoning as before allows to conclude that $\chi(L(\Gamma_n, S_n))=\chi(L(\Gamma, S))-1$.
By definition of $(\Gamma_n, S_n)$,
we have $\displaystyle \lim_{n\to \infty}k_{ij}(n)=k_{ij}$ for $1\leq i, j\leq n$.
Property (1) of Theorem \ref{thm:2.3} implies that $\{(\Gamma_n, S_n)\}_{n\geq 7}$ converges to $(\Gamma, S)$ in $\mathcal{C}_N$.
\end{proof}

\begin{rem}\label{rem:1}
Suppose that $(\Gamma, S)$ is a Coxeter system of at most dimension $2$ and $\chi(L(\Gamma, S))\geq 1$.
By the proof of Lemma \ref{lem:4.1},
one sees that the presentation diagram $X(\Gamma, S)$ is connected if and only if the underlying graph of $X(\Gamma, S)$ is a tree.
Therefore,
if the presentation diagram $X(\Gamma, S)$ is connected and all the edges are labeled by $2$,
then the underlying graph is a tree.
\end{rem}

By using Lemma \ref{lem:4.1} repeatedly,
we obtain the following.
\begin{cor}\label{cor:4.2}
Let $(\Gamma, S)$ be a non-spherical, non-affine marked Coxeter system of dimension at most $2$ and rank $N$.
Suppose that either the presentation diagram $X(\Gamma, S)$ is disconnected, 
or has an edge labeled by $k\geq 3$.
Then,
there exists a sequence of marked Coxeter systems $\{(\Gamma_n, S_n)\}_{n\geq 7}$ of rank $N$ such that for $n\geq 7$,
\begin{itemize}
\item[(1)] $(\Gamma_n, S_n)\preceq{(\Gamma_{n+1}, S_{n+1})}\preceq{(\Gamma, S)}$
\item[(2)] $\dim(\Gamma_n, S_n)\leq 2$
\item[(3)] $\chi(L(\Gamma_n, S_n))=0$
\item[(4)] The sequence $\{(\Gamma_n, S_n)\}$ converges to $(\Gamma, S)$ in the space $\mathcal{C}_N$ of marked Coxeter systems of rank $N$.
\end{itemize}
\end{cor}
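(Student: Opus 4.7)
The plan is to iterate the construction from the proof of Lemma \ref{lem:4.1} exactly $\chi_{0}:=\chi(L(\Gamma,S))$ times. Writing $(\Gamma_{n}^{(0)},S_{n}^{(0)})=(\Gamma,S)$, I would define inductively, for $k=1,\ldots,\chi_{0}$ and $n\geq 7$, a marked Coxeter system $(\Gamma_{n}^{(k)},S_{n}^{(k)})$ whose presentation diagram is obtained from $X(\Gamma_{n}^{(k-1)},S_{n}^{(k-1)})$ by adjoining a single new edge with label $n$, following the recipe of Lemma \ref{lem:4.1}. Setting $(\Gamma_{n},S_{n}):=(\Gamma_{n}^{(\chi_{0})},S_{n}^{(\chi_{0})})$, property (3) is automatic, since each step drops the Euler characteristic of the presentation diagram (equivalently, of the nerve) by one. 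Properties (1) and (4) come for free from the construction: every modified label is of the form $\infty\mapsto n$, so $k_{ij}(n)\leq k_{ij}(n+1)\leq k_{ij}$ for all $i,j$, and since each such $n$ tends to $\infty$, Theorem \ref{thm:2.3}(1) yields $(\Gamma_{n},S_{n})\to(\Gamma,S)$ in $\mathcal{C}_{N}$.

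The inductive step from level $k$ to $k+1$ proceeds by a case split on $X(\Gamma_{n}^{(k)},S_{n}^{(k)})$, exactly as in the proof of Lemma \ref{lem:4.1}. If this diagram is disconnected, I would pick two vertices $s_{p},s_{q}$ lying in distinct components (so the current $k_{pq}$ is still $\infty$) and insert a new edge between them with label $n$. Otherwise the diagram is connected, so by Remark \ref{rem:1} it is a tree and in particular $\chi_{0}-k=1$. I would then locate an edge $\{s_{p},s_{q}\}$ of this tree whose label is $\geq 3$, choose an adjacent tree edge $\{s_{q},s_{r}\}$, and insert the new edge $\{s_{p},s_{r}\}$ with label $n$.

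The main obstacle is ensuring that the connected sub-step is always feasible, namely that whenever the current tree appears it really does contain some edge labeled $\geq 3$. For $\chi_{0}=1$ the only step is $k=0$, and this is the standing hypothesis of the corollary. For $\chi_{0}\geq 2$, Remark \ref{rem:1} forces every intermediate graph with $\chi\geq 2$ to be disconnected, so all of the first $\chi_{0}-1$ steps are of bridge type and each of them contributes an edge with label $n\geq 7$ to the current diagram. Hence when the connected case is first (and only) encountered, necessarily at $k=\chi_{0}-1$, the desired edge labeled $\geq 3$ has already been placed there by a previous bridge move.

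To finish I would verify property (2). A bridge move creates no new cycle and therefore no new spherical parabolic subgroup of rank $3$. A triangle move produces exactly one new triangle whose three labels have the form $(k_{pq},k_{qr},n)$ with $k_{pq}\geq 3$ and $n\geq 7$, and inspection of Figure \ref{fig:figure2.3} shows that no such labelling occurs as the presentation diagram of a spherical Coxeter system of rank $3$. In either case the maximal spherical parabolic subgroups remain of rank at most $2$, so $\dim(\Gamma_{n}^{(k+1)},S_{n}^{(k+1)})\leq 2$, and the induction goes through.
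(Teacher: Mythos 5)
Your proof is correct and takes essentially the same approach as the paper: the paper also obtains the sequence by iterated application of Lemma \ref{lem:4.1} and then passes to a diagonal subsequence of the resulting multi-indexed family, which is exactly what your construction with the same $n$ at every level produces. The additional observation you make---that for $\chi_0 \geq 2$, Remark \ref{rem:1} forces all but possibly the last step to be of bridge type, so that the required edge with label $\geq 3$ is automatically present whenever the tree case occurs---is in the paper handled more tersely by simply noting that after one application of Lemma \ref{lem:4.1} the diagram has an edge labeled $n_1 \geq 7$, which already re-establishes the hypothesis of the lemma.
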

\begin{proof}
We take a sequence of marked Coxeter systems $\{(\Gamma_{n_1}, S_{n_1})\}_{n_1\geq 7}$ of rank $N$ as in Lemma \ref{lem:4.1}.
If $\chi(L(\Gamma, S))=1$,
then for $n_1\geq 7$,
\begin{equation*}
\chi(L(\Gamma_{n_1}, S_{n_1}))=\chi(L(\Gamma, S))-1=0.
\end{equation*}
Hence the sequence $\{(\Gamma_{n_1}, S_{n_1})\}_{n_1\geq 7}$ satisfies the properties in Corollary \ref{cor:4.2}

\vspace{1mm}
Suppose that $\chi(L(\Gamma, S))\geq 2$.
The presentation diagram $X(\Gamma_{n_1}, S_{n_1})$ has an edge labeled by $n_1\geq 7$ and $\chi(L(\Gamma_{n_1}, S_{n_1}))=\chi(L(\Gamma, S))-1\geq 1$.
For each $n_1\geq 7$, by applying Lemma \ref{lem:4.1} to $(\Gamma_{n_1}, S_{n_1})$,
there exists a sequence of marked Coxeter systems $\{(\Gamma_{n_1,n_2}, S_{n_1, n_2})\}_{n_2\geq 7}$ of rank $N$ satisfying the properties in Lemma \ref{lem:4.1}.
Moreover,
we may assume that $(\Gamma_{n_1, n_2}, S_{n_1, n_2})\preceq{(\Gamma_{n_1', n_2'}, S_{n_1', n_2'})}$ for $n_1\leq n_1'$ and $n_2\leq n_2'$.
If $\chi(L(\Gamma, S))=2$,
then for $n_1, n_2\geq 7$,
\begin{equation*}
\chi(L(\Gamma_{n_1, n_2}, S_{n_1, n_2}))=\chi(L(\Gamma_{n_1}, S_{n_1}))-1=\chi(L(\Gamma, S))-2=0.
\end{equation*}
Therefore,
the diagonal subsequence $\{(\Gamma_{n,n}, S_{n,n})\}_{n\geq 7}$ satisfies the properties in Corollary \ref{cor:4.2}.
By repeating this procedure until the Euler characteristic vanishes,
which completes the proof.
\end{proof}

Let $(\Gamma, S)$ be a non-spherical, non-affine marked Coxeter system of dimension at most $2$ with $\chi(L(\Gamma, S))\geq 1$.
For simplicity of notation, we write $\chi$ instead of $\chi(L(\Gamma, S))$.
We denote by $N$ (resp. $E$) the number of vertices (resp. edges) of the presentation diagram $X(\Gamma, S)$.
It follows that $N-E=\chi\geq 1$.
Suppose that the set of labels of the edges of $X(\Gamma, S)$ is $\{k_1,\ldots, k_r\}$.
Let us write $E_i$ for the number of edges of $X(\Gamma, S)$ labeled by $k_i$,
so that $E=E_1+\cdots+E_r$.
We obtain the following equality by Steinberg's formula \eqref{eq:2-1}; see also \cite[p.479]{Floyd92}.
\begin{align*}
\dfrac{1}{f_{(\Gamma, S)}(z^{-1})}
&=1-\dfrac{N}{[2]}+\sum_{i=1}^r \dfrac{E_i}{[2, k_i]}
=1-\dfrac{E+\chi}{[2]}+\sum_{i=1}^r \dfrac{E_i}{[2, k_i]}\\
&=\dfrac{[2,k_1,\ldots,k_r]-(E+\chi)[k_1,\ldots,k_r]+\displaystyle \sum_{i=1}^r E_i[k_1,\ldots,\widehat{k_i},\ldots,k_r]}{[2,k_1,\ldots,k_r]}\\
&=\dfrac{[2,k_1,\ldots,k_r]+\displaystyle \sum_{i=1}^r E_i(1-[k_i])[k_1,\ldots,\widehat{k_i},\ldots,k_r]-\chi[k_1,\ldots,k_r]}{[2,k_1,\ldots,k_r]}\\
&=\dfrac{[2,k_1,\ldots,k_r]-\displaystyle \sum_{i=1}^r E_iz[k_i-1][k_1,\ldots,\widehat{k_i},\ldots,k_r]-\chi[k_1,\ldots,k_r]}{[2,k_1,\ldots,k_r]}\\
&=\dfrac{[2,k_1,\ldots,k_r]-\displaystyle \sum_{i=1}^r E_iz[k_1,\ldots,k_i-1,\ldots,k_r]-\chi[k_1,\ldots,k_r]}{[2,k_1,\ldots,k_r]}.
\end{align*}
If $\chi=1$,
then
\begin{align*}
\dfrac{1}{f_{(\Gamma, S)}(z^{-1})}
&=\dfrac{([2]-1)[k_1,\ldots,k_r]-\displaystyle \sum_{i=1}^r E_iz[k_1,\ldots,k_i-1,\ldots,k_r]}{[2,k_1,\ldots,k_r]}\\
&=\dfrac{z\left([k_1,\ldots,k_r]-\displaystyle \sum_{i=1}^r E_i[k_1,\ldots,k_i-1,\ldots,k_r]\right )}{[2,k_1,\ldots,k_r]}.
\end{align*}
We define the polynomial $P(z)$ as 
\begin{equation*}
P(z)=\begin{cases}
[k_1,\ldots,k_r]-\displaystyle \sum_{i=1}^r E_i[k_1,\ldots,k_i-1,\ldots,k_r] & \text{if }\chi=1,\\
[2,k_1,\ldots,k_r]-\displaystyle \sum_{i=1}^r E_iz[k_1,\ldots,k_i-1,\ldots,k_r]-\chi[k_1,\ldots,k_r] & \text{if }\chi\geq 2.
\end{cases}
\end{equation*}
It follows that 
\begin{equation*}
\dfrac{1}{f_{(\Gamma, S)}(z^{-1})}=\begin{cases} 
 \dfrac{zP(z)}{[2,k_1,\ldots,k_r]} & \text{if }\chi=1,\\[10pt]
\dfrac{P(z)}{[2,k_1,\ldots,k_r]} & \text{if }\chi\geq 2. 
\end{cases}
\end{equation*}
In order to show that $P(z)$ is a product of cyclotomic polynomials and exactly one Pisot polynomial,
we use the following.
\begin{lem}\cite[Lemma 1]{Floyd92}\label{lem:4.3}
Let $P(z)$ be a monic polynomial with integer coefficients.
We denote the reciprocal polynomial of $P(z)$ by $\widetilde{P}(z)$, that is,
$\widetilde{P}(z)=z^{\deg{P}} P(z^{-1})$.
Suppose that $P(z)$ satisfies the following conditions.
\begin{itemize}
\item[(i)] $P(0)\neq 0$ and $P(1)<0$
\item[(ii)] $P(z)\neq \widetilde{P}(z)$
\item[(iii)] For sufficiently large integer $m$,
$\dfrac{z^m P(z)-\widetilde{P}(z)}{z-1}$ is a product of cyclotomic polynomials and exactly one Salem polynomial.
\end{itemize}
Then the polynomial $P(z)$ is a product of cyclotomic polynomials and exactly one Pisot polynomial.
\end{lem}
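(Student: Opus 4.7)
The plan is to pass from the restrictive Salem/cyclotomic factorization of $Q_m(z) := (z^m P(z) - \widetilde{P}(z))/(z-1)$ for large $m$ to a factorization of $P(z)$ itself, by realizing the Pisot number $\tau$ we seek as the limit of the Salem numbers $\tau_m$ appearing as the dominant roots of $Q_m$. The approach rests on the general principle that Pisot numbers arise as limits of Salem numbers whose polynomials are cut out by such $z^m P(z) - \widetilde{P}(z)$ constructions.

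First I would collect the basic facts about $Q_m$: since $\widetilde{P}(1) = P(1)$, the numerator vanishes at $z=1$, so $Q_m \in \mathbb{Z}[z]$ is monic of degree $m + \deg P - 1$, and a direct substitution $z \mapsto 1/z$ shows $Q_m$ is self-reciprocal. Next, I would apply Rouch\'e's theorem on the circles $|z| = 1 \pm \epsilon$: for $m$ large, $|z^m P(z)|$ dominates $|\widetilde{P}(z)|$ on $|z| = 1+\epsilon$ and the reverse inequality holds on $|z| = 1-\epsilon$, so the roots of $z^m P(z) - \widetilde{P}(z)$ outside $|z| = 1+\epsilon$ are in bijection with, and converge to, the roots of $P(z)$ outside the unit disk. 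Hypothesis (iii) together with the structure of Salem polynomials gives $Q_m$ exactly one root outside the unit disk, namely $\tau_m > 1$. Hence $P$ has a unique root $\tau$ with $|\tau|>1$; by hypothesis (i) and monicity, $\tau$ is real and greater than $1$, and $\tau_m \to \tau$.

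The central difficulty is to show that the minimal polynomial $M(z) \in \mathbb{Z}[z]$ of $\tau$ has all its other roots \emph{strictly} inside the unit disk. Only the closed disk is immediate from the previous step; I must exclude roots of $M$ of modulus exactly one. If $\beta$ is such a root, then because $P$ has real coefficients and $|\beta|=1$, one has $\widetilde{P}(\beta) = 0$ as well, so $\beta$ is a root of $Q_m$ for every large $m$. Hypothesis (iii) then forces $\beta$ to be either a root of unity or a root of the Salem factor $S_m$ of $Q_m$ for all large $m$. In the first case, irreducibility of $M$ would make $M$ cyclotomic, contradicting $\tau > 1$. In the second case, the minimal polynomial of $\beta$ divides every $S_m$; by irreducibility of Salem polynomials this stabilizes $S_m$ and makes $\tau$ itself a Salem number, so $M = \widetilde{M}$. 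Writing $P = M \cdot R$ and applying a secondary Rouch\'e argument to the resulting identity $z^m R(z) - \widetilde{R}(z) = (z-1) C_m(z)$ with $C_m$ cyclotomic, every root of $R$ is forced onto the unit circle and is a root of unity, so by Kronecker's theorem $R$ is itself a product of cyclotomic polynomials. A product of cyclotomic polynomials satisfies $\widetilde{R} = \pm R$, so either $P = \widetilde{P}$ (contradicting (ii)) or $(z-1) \mid R$ and hence $P(1) = 0$ (contradicting (i)). I expect this ``Salem-stuck'' case to be the main obstacle, and it is precisely the point at which the asymmetry hypothesis (ii) is indispensable.

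With the unit-circle roots of $M$ excluded, $M$ is a Pisot polynomial. The quotient $R = P/M$ is then a monic integer polynomial whose roots all lie in the closed unit disk, and the same dichotomy above shows that any root of $R$ on the unit circle is a root of unity; Kronecker's theorem then implies $R$ is a product of cyclotomic polynomials, giving the desired factorization $P = (\text{cyclotomic})\cdot(\text{Pisot})$.
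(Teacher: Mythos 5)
The paper does not prove this lemma; it is imported verbatim as~\cite[Lemma~1]{Floyd92}, so there is no internal proof to compare against. Your blind reconstruction is, up to presentation, Floyd's original argument, and it is correct: the combination $Q_m=(z^mP-\widetilde{P})/(z-1)$ is monic, integral, self-reciprocal of degree $m+\deg P-1$; Rouch\'e on $|z|=1\pm\epsilon$ ties the roots of $Q_m$ outside the unit circle to those of $P$, so hypothesis~(iii) forces $P$ to have a unique root $\tau>1$ (real, since $P(1)<0$ and $P$ is monic) and identifies $\tau$ as the limit of the Salem roots $\tau_m$; and the only delicate point, ruling out conjugates of $\tau$ on $|z|=1$, is handled exactly as in Floyd by the cyclotomic-or-Salem dichotomy, where the Salem-stuck branch stabilizes $S_m=M$, gives $(z^mR-\widetilde R)/(z-1)$ cyclotomic for $R=P/M$, and then collides with~(i) and~(ii). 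Two small streamlinings you could make: once you know $R$ is a monic integer polynomial with $R(0)\neq 0$ and all roots in the closed unit disk, Kronecker's theorem alone already forces every root onto the unit circle and into the roots of unity, so you do not need to re-run the cyclotomic/Salem dichotomy in the final step; and in the Salem-stuck contradiction, noting $M(1)<0$ (true for any Salem polynomial) together with $P(1)<0$ gives $R(1)>0$, which kills the $\widetilde R=-R$ branch immediately and leaves only the $\widetilde P=P$ contradiction with~(ii).
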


\begin{thm}\label{thm:4.4}
Let $(\Gamma, S)$ be a non-spherical, non-affine Coxeter system of dimension at most $2$ with $\chi(L(\Gamma, S))\geq 1$.
Then the growth rate $\tau(\Gamma, S)$ is a Pisot number.
\end{thm}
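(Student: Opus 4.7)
The plan is to apply Lemma~\ref{lem:4.3} to the polynomial $P(z)$ exhibited just before the theorem. Once the three hypotheses are verified, $P$ factors as a product of cyclotomic polynomials and exactly one Pisot polynomial, so the growth rate $\tau(\Gamma,S)$—the largest-modulus root of $P$—is a Pisot number.

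The algebraic backbone is the reciprocal-polynomial identity
\[
P(z)-\widetilde{P}(z)=\chi(z-1)A(z)\quad(\chi\geq 2),\qquad zP(z)-\widetilde{P}(z)=(z-1)A(z)\quad(\chi=1),
\]
where $A(z)=[k_1,\ldots,k_r]$, obtained by termwise reciprocation and exploiting that each $[m]$ is palindromic. Rearranging yields
\[
\frac{z^mP(z)-\widetilde{P}(z)}{z-1}=[m]P(z)+\chi A(z)\quad(\chi\geq 2),
\]
with the companion form $z[m-1]P(z)+A(z)$ for $\chi=1$, exactly the shape demanded by condition~(iii) of Lemma~\ref{lem:4.3}. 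Conditions~(i) and (ii) are then routine: $P(0)=1-\chi$ (or $1-E$ for $\chi=1$, with $E\geq 2$ because $X(\Gamma,S)$ is a tree with $N\geq 3$ vertices by Remark~\ref{rem:1}), so $P(0)\neq 0$; $P(1)$ equals $k_1\cdots k_r\bigl((2-\chi)-\sum E_i(1-1/k_i)\bigr)$, which is strictly negative after a short case analysis mirroring the proof of Theorem~\ref{thm:3.2} and exploiting the non-spherical, non-affine hypothesis; and $P\neq\widetilde{P}$ is immediate from the identities above.

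For condition~(iii), I would invoke Corollary~\ref{cor:4.2} to obtain a sequence $(\Gamma_n,S_n)$ of dimension-$\leq 2$ Coxeter systems with $\chi(L(\Gamma_n,S_n))=0$, formed from $(\Gamma,S)$ by adding $\chi$ edges of common label $n$ for $n\geq 7$ larger than every existing label. A direct application of Steinberg's formula identifies the numerator of $1/f_{(\Gamma_n,S_n)}(z^{-1})$ with $P(z)[n]+\chi A(z)$ (respectively $zP(z)[n]+A(z)$ for $\chi=1$), which by the rearrangement above equals $(z^mP-\widetilde{P})/(z-1)$ with $m=n$ (resp.\ $m=n+1$). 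Lemma~\ref{lem:3.3} applied to $(\Gamma_n,S_n)$ ensures this polynomial is a product of cyclotomic polynomials and exactly one Salem polynomial, verifying condition~(iii) and completing the proof via Lemma~\ref{lem:4.3}. The main obstacle I anticipate is reconciling the two distinct shapes of $P$ for $\chi=1$ and $\chi\geq 2$: the reciprocal identity, the sign computation for $P(1)$, and the correct exponent shift each take a different form and must be tracked separately. One auxiliary case requires separate treatment: when $X(\Gamma,S)$ is a connected tree with every edge labelled $2$, Corollary~\ref{cor:4.2} is not applicable; but a direct Steinberg computation yields $P(z)=z-(N-2)$, whence $\tau(\Gamma,S)=N-2\geq 2$, a trivially Pisot positive integer.
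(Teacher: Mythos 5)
Your proposal follows essentially the same strategy as the paper: split off the degenerate case (tree with all labels $2$), and in the main case apply Lemma~\ref{lem:4.3} to the polynomial $P$ defined from Steinberg's formula, verifying condition~(iii) by identifying $(z^m P-\widetilde{P})/(z-1)$ with the numerator $P_n$ of $1/f_{(\Gamma_n,S_n)}(z^{-1})$ coming from Corollary~\ref{cor:4.2} and applying Lemma~\ref{lem:3.3}. Your route through the closed reciprocal identities $P-\widetilde{P}=\chi(z-1)A$ (for $\chi\geq 2$) and $zP-\widetilde{P}=(z-1)A$ (for $\chi=1$), with $A=[k_1,\ldots,k_r]$, is a slightly cleaner packaging that disposes of conditions~(ii) and (iii) in one stroke, and your exponent assignment $m=n$ for $\chi\geq 2$ and $m=n+1$ for $\chi=1$ is in fact the correct one (the paper's displayed cases have the two exponents interchanged, but this is immaterial since Lemma~\ref{lem:4.3} only requires $m$ large). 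Your remaining verifications ($P(0)\neq 0$, $P(1)<0$, $P$ not reciprocal, and the auxiliary tree case giving $\tau=N-2$) all agree with the paper; the edgeless case you do not treat separately is subsumed in the disconnected case via Corollary~\ref{cor:4.2}, so no gap arises. Overall this is a correct proof that tracks the paper's argument closely.
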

\begin{proof}
Assume that $(\Gamma, S)$ has rank $N$.
If the presentation diagram $X(\Gamma, S)$ has no edges,
then by Steinberg's formula \eqref{eq:2-1},
\begin{equation*}
\dfrac{1}{f_{(\Gamma, S)}(z^{-1})}=1-\dfrac{N}{[2]}=\dfrac{z-(N-1)}{[2]}.
\end{equation*}
Therefore,
the growth rate $\tau(\Gamma, S)=N-1$, which is a Pisot number.

From now on,
we assume that the presentation diagram $X(\Gamma, S)$ has at least one edge.
Denote by $E\geq 1$ the number of edges of $X(\Gamma, S)$.
We divide the proof into two cases:
the presentation diagram $X(\Gamma, S)$ is a tree all of whose edges are labeled by $2$,
and otherwise.

\vspace{2mm}
In the first case,
we have $E=N-1$.
Therefore,
by Steinberg's formula \eqref{eq:2-1},
\begin{equation*}
\dfrac{1}{f_{(\Gamma, S)}(z^{-1})}=1-\dfrac{N}{[2]}+\dfrac{N-1}{[2,2]}=\dfrac{[2,2]-N[2]+N-1}{[2,2]}=\dfrac{z(z-(N-2))}{(1+z)^2}.
\end{equation*}
Therefore,
the growth rate $\tau(\Gamma, S)=N-2$, which is a Pisot number.

\vspace{2mm}
In other case,
by Remark \ref{rem:1},
either the presentation diagram $X(\Gamma, S)$ is disconnected or has an edge labeled by $k\geq 3$.
We fix an ordering of the generating set $S$.
Let us take a sequence of marked Coxeter systems $\{(\Gamma_n, S_n)\}_{n\geq 7}$ of rank $N$ as in Corollary \ref{cor:4.2}.
It follows from property (3) that the number of edges of $X(\Gamma_n, S_n)$ equals $E+\chi(L(\Gamma, S))$.
In particular,
for every $n\geq 7$ different from $k_1,\ldots,k_r$,
the number of edges of $X(\Gamma_n, S_n)$ labeled by $n$ is equal to $\chi(L(\Gamma, S))$.
For simplicity,
we write $\chi$ instead of $\chi(L(\Gamma, S))$.
By Steinberg's formula \eqref{eq:2-1},
we have
\begin{equation*}
\dfrac{1}{f_{(\Gamma_n, S_n)}(z^{-1})}
=1-\dfrac{N}{[2]}+\sum_{i=1}^r \dfrac{E_i}{[2, k_i]}+\dfrac{\chi}{[2, n]}=\dfrac{P_n(z)}{[2,k_1,\ldots,k_r,n]},
\end{equation*}
where 
\begin{equation*}
P_n(z)=[2,k_1,\ldots,k_r,n]-N[k_1,\ldots,k_r,n]+\sum_{i=1}^r E_i[k_1,\ldots,\widehat{k_i},\ldots,k_r,n]+\chi[k_1,\ldots,k_r].
\end{equation*}
From the equality $N=E_1+\cdots+E_r+\chi$,
we obtain that 
\begin{align*}
P_n(z)&=[2,k_1,\ldots,k_r,n]-\sum_{i=1}^r E_i z[k_1,\ldots,k_i-1,\ldots,k_r,n]-\chi z[k_1,\ldots,k_r,n-1].
\end{align*}
Define the polynomials $P(z)$ as
\begin{equation*}
P(z)=\begin{cases}
[k_1,\ldots,k_r]-\displaystyle \sum_{i=1}^r E_i[k_1,\ldots,k_i-1,\ldots,k_r] & \text{if }\chi=1,\\
[2,k_1,\ldots,k_r]-\displaystyle \sum_{i=1}^r E_iz[k_1,\ldots,k_i-1,\ldots,k_r]-\chi[k_1,\ldots,k_r] & \text{if }\chi\geq 2,
\end{cases}
\end{equation*}
and $\widetilde{P}(z)=z^{\deg P}P(z^{-1})$.
We obtain that 
\begin{equation*}
(z-1)P_n(z)=
\begin{cases} 
z^nP(z)-\widetilde{P}(z) & \text{if }\chi=1,\\
z^{n+1}P(z)-\widetilde{P}(z) & \text{if }\chi\geq 2.
\end{cases}
\end{equation*}

In order to apply Lemma \ref{lem:4.3} to $P(z)$,
we need to show that $P(0)\neq 0, P(1)<0$, and that $P(z)$ is not reciprocal.
First,
\begin{equation*}
P(0)=\begin{cases}
1-E &\text{if }\chi=1,\\
1-\chi & \text{if }\chi\geq 2.
\end{cases}
\end{equation*}
It follows that $P(0)\neq 0$.
Since $P(z)$ is monic, 
we also conclude that $P(z)$ is not reciprocal.
Finally, we see that $P(1)<0$ as follows.
In the case $\chi=1$,
\begin{equation*}
P(1)
=\prod_{i=1}^r k_i-\sum_{i=1}^r \left(E_i\cdot \prod_{j=1}^r k_j\cdot  \dfrac{k_i-1}{k_i}\right)
=\prod_{i=1}^r k_i\cdot \left\{1-\sum_{i=1}^r E_i\left(1-\dfrac{1}{k_i}\right)\right\}.
\end{equation*}
In the case $\chi\geq 2$,
\begin{align*}
P(1)
&=2\prod_{i=1}^r k_i-\sum_{i=1}^r \left(E_i\cdot \prod_{j=1}^r k_j\cdot  \dfrac{k_i-1}{k_i}\right)-\chi\prod_{i=1}^r k_i\\
&=\prod_{i=1}^r k_i\cdot \left\{2-\sum_{i=1}^r E_i\left(1-\dfrac{1}{k_i}\right)-\chi\right\}\\
&=\prod_{i=1}^r k_i\cdot \left\{(1-\chi)+1-\sum_{i=1}^r E_i\left(1-\dfrac{1}{k_i}\right)\right\}
\end{align*}
For $N\geq 4$,
\begin{equation*}
\sum_{i=1}^r E_i\left(1-\dfrac{1}{k_i}\right)\geq \sum_{i=1}^r E_i\left(1-\dfrac{1}{2}\right)=\dfrac{E}{2}=\dfrac{N-1}{2}\geq \dfrac{3}{2}>1.
\end{equation*}
It follows that $P(1)<0$ from
\begin{equation*}
1-\sum_{i=1}^r E_i\left(1-\dfrac{1}{k_i}\right)<0.
\end{equation*}
For $N=3$,
the presentation diagram $X(\Gamma, S)$ is one of the diagrams in Figure \ref{fig:figure4.3}.
\begin{figure}[htbp]
\centering
\includegraphics[scale=0.5]{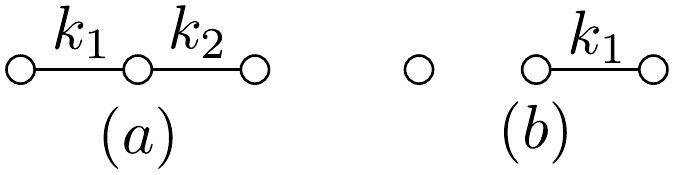}
\caption{The presentation diagrams for $N=3$}
\label{fig:figure4.3}
\end{figure}

\noindent 
In the case $(a)$,
we necessarily have $k_1\geq 3$ or $k_2\geq 3$,
so that 
\begin{equation*}
1-\left(1-\dfrac{1}{k_1}\right)-\left(1-\dfrac{1}{k_2}\right)\leq 1-\dfrac{1}{2}-\dfrac{2}{3}=-\dfrac{1}{6}<0.
\end{equation*}
Hence $P(1)<0$.
The same reasoning applies to the case $(b)$.

By Lemma \ref{lem:4.3},
the polynomial $P(z)$ is a product of cyclotomic polynomials and exactly one Pisot polynomial,
and hence the growth rate $\tau(\Gamma, S)$ is a Pisot number.
\end{proof}

\begin{thm}
Let $(\Gamma, S)$ be a non-spherical, non-affine Coxeter system of dimension at most $2$ with $\chi(L(\Gamma, S))\geq 1$.
Then, there exists a sequence of Coxeter systems $(\Gamma_n, S_n)$ of dimension at most $2$ with vanishing Euler characteristic such that the growth rate $\tau(\Gamma_n, S_n)$ converges to $\tau(\Gamma, S)$ from below.
\end{thm}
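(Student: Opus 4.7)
The plan is to mirror the case split used in the proof of Theorem \ref{thm:4.4}: first isolate the cases where the presentation diagram $X(\Gamma, S)$ is either disconnected or contains an edge with label at least $3$; then handle separately the residual case of a tree every edge of which is labeled $2$.

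In the first case, the hypotheses of Corollary \ref{cor:4.2} are met. It supplies a sequence $\{(\Gamma_n, S_n)\}_{n\geq 7}$ of marked Coxeter systems of rank $N$ with $\dim(\Gamma_n, S_n)\leq 2$ and $\chi(L(\Gamma_n,S_n))=0$, satisfying $(\Gamma_n, S_n)\preceq (\Gamma_{n+1},S_{n+1})\preceq (\Gamma, S)$, and converging to $(\Gamma, S)$ in $\mathcal{C}_N$. Theorem \ref{thm:2.2} then makes $\{\tau(\Gamma_n, S_n)\}$ non-decreasing and bounded above by $\tau(\Gamma, S)$, while Theorem \ref{thm:2.3}(2) identifies the limit with $\tau(\Gamma, S)$, so convergence is from below. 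This handles, in particular, the subcase where $X(\Gamma, S)$ has no edges, which is disconnected whenever $N\geq 3$.

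The remaining case is when $X(\Gamma, S)$ is connected and every edge is labeled $2$. By Remark \ref{rem:1} its underlying graph is then a tree, and since any Coxeter system of rank at most $3$ of this form is spherical or affine, we must have $N\geq 4$. By the computation at the start of the proof of Theorem \ref{thm:4.4}, $\tau(\Gamma, S)=N-2$. Here Corollary \ref{cor:4.2} does not directly apply: for instance, in the star $K_{1,N-1}$ with all edges labeled $2$, any leaf-to-leaf edge with a finite label would create a spherical triangle of type $(2,2,n)$, forcing the dimension above $2$. The idea is therefore to step outside rank $N$ and approximate $\tau(\Gamma,S)$ using Coxeter systems of rank $N-1$.

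Concretely, for $n\geq 7$ let $(\Gamma_n, S_n)$ be the Coxeter system of rank $N-1$ whose presentation diagram is the $(N-1)$-cycle with every edge labeled $n$. Then $\chi(L(\Gamma_n,S_n))=(N-1)-(N-1)=0$; moreover, for $N-1\geq 4$ the cycle contains no triangle, while for $N-1=3$ the triangle $(n,n,n)$ with $n\geq 7$ is non-spherical, so $\dim(\Gamma_n,S_n)\leq 2$ throughout. As $n\to\infty$, Theorem \ref{thm:2.3}(1) yields convergence of $(\Gamma_n, S_n)$ in $\mathcal{C}_{N-1}$ to the Coxeter system $(\Gamma_\infty, S_\infty)$ whose presentation diagram has $N-1$ vertices and no edges, whence Steinberg's formula gives $\tau(\Gamma_\infty, S_\infty)=N-2$. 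Theorem \ref{thm:2.3}(2) then produces $\tau(\Gamma_n, S_n)\to N-2=\tau(\Gamma, S)$, while the chain $(\Gamma_n, S_n)\preceq (\Gamma_{n+1},S_{n+1})\preceq (\Gamma_\infty, S_\infty)$ combined with Theorem \ref{thm:2.2} secures the convergence from below. The chief obstacle is precisely this second case: because Corollary \ref{cor:4.2} forces one to remain at rank $N$ and the star example forbids any single-edge completion, one must leave $\mathcal{C}_N$ and build the approximating sequence by hand in $\mathcal{C}_{N-1}$.
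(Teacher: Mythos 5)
Your proof is correct, and the first case is handled exactly as in the paper: apply Corollary~\ref{cor:4.2} when $X(\Gamma,S)$ is disconnected or has an edge labeled $\geq 3$, then combine Theorems~\ref{thm:2.2}, \ref{thm:2.3}, and \ref{thm:3.2}. Where you genuinely diverge from the paper is the residual case of a tree with all edges labeled $2$. The paper stays at rank $N$: it replaces the given tree by the \emph{path} on $N$ vertices with all edges labeled $2$ (which, by the same Steinberg computation, also has growth rate $N-2$), and then closes the two endpoints with an edge labeled $n$ to produce an $N$-cycle; since an $N$-cycle with $N\geq 4$ has no triangle, the dimension constraint is preserved. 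Your alternative drops to rank $N-1$ and uses the $(N-1)$-cycle with all labels equal to $n$, converging in $\mathcal{C}_{N-1}$ to the system with $N-1$ generators and no finite relations among distinct generators, whose growth rate is again $N-2$. Both constructions yield a sequence of dimension-$\leq 2$, Euler-characteristic-zero Coxeter systems whose (Salem) growth rates increase to $N-2$ from below, so both prove the theorem. One small caveat: your closing remark that ``one must leave $\mathcal{C}_N$'' is not actually forced. It is true that the star $K_{1,N-1}$ admits no single-edge completion within dimension $2$, but nothing requires keeping the given tree; the paper's move of swapping the tree for the path before adding the closing edge shows the approximation can be done entirely inside $\mathcal{C}_N$. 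Your rank-$(N-1)$ construction is a perfectly good alternative, just not a necessity.
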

\begin{proof}
We denote by $N$ the rank of $(\Gamma, S)$.
The proof is divided into two cases:
either the presentation diagram $X(\Gamma, S)$ is disconnected or has an edge labeled by $k\geq 3$,
and otherwise.

\vspace{2mm}
In the first case,
we fix an ordering of $S$ and 
we take a sequence of marked Coxeter systems $\{(\Gamma_n, S_n)\}_{n\geq 7}$ of rank $N$ as in Corollary \ref{cor:4.2}.
By combining Theorem \ref{thm:2.2}, Theorem \ref{thm:2.3}, and Theorem \ref{thm:3.2},
we conclude that the growth rate $\tau(\Gamma_n, S_n)$ is a Salem number and the sequence $\{\tau(\Gamma_n, S_n)\}_{n\geq 7}$ converges to $\tau(\Gamma, S)$ from below.

\vspace{2mm}
In other case,
by Remark \ref{rem:1},
the presentation diagram $X(\Gamma, S)$ is a tree with all edges labeled by $2$
and the growth rate $\tau(\Gamma, S)=N-2$.
Since $(\Gamma, S)$ is non-spherical and non-affine,
it forces $N\geq 4$.
Consider the marked Coxeter system $(\hat{\Gamma}, \hat{S})$ of rank $N$ whose presentation diagram $X(\hat{\Gamma}, \hat{S})$ is depicted in Figure \ref{fig:figure4.4}.
\begin{figure}[htbp]
\centering
\includegraphics[scale=0.5]{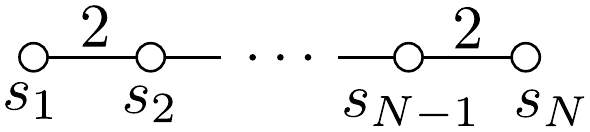}
\caption{The presentation diagram $X(\hat{\Gamma}, \hat{S})$}
\label{fig:figure4.4}
\end{figure}

\noindent
Let $(\Gamma_n, S_n)$ be the marked Coxeter system of rank $N$ whose presentation diagram $X(\Gamma_n, S_n)$ is obtained by adding an edge labeled by $n\geq 3$ between $s_1$ and $s_N$.
As a direct consequence, $(\Gamma_n, S_n)$ converges to $(\hat{\Gamma}, \hat{S})$ in the space  of marked Coxeter systems $\mathcal{C}_N$ of rank $N$.
Since $\tau(\hat{\Gamma}, \hat{S})=\tau(\Gamma, S)$,
by combining Theorem \ref{thm:2.2}, Theorem \ref{thm:2.3}, and Theorem \ref{thm:3.2},
the assertion follows.
\end{proof}

\section{Growth rates of Coxeter systems and Perron numbers}\label{Section5}

In this section,
we consider Coxeter systems of dimension at most $2$ with negative Euler characteristic.  We provide infinite sequences of such Coxeter systems whose growth rates are Perron numbers.  

\vspace{2mm}
The following is fundamental for our considerations. 
Let $(\Gamma_{\star},S_{\star})$ be the Coxeter system with presentation diagram depicted in Figure \ref{fig:figure5.1}. As  discussed in Example \ref{ex:1},   the radius of convergence of  its growth series is given by $r_{\star}=\frac{1}{\tau(\Gamma_{\star}, S_{\star})}=(\varphi-1)^2$, where $\varphi$ is the golden ratio.  
\begin{figure}[htbp]
\centering
\includegraphics[scale=0.5]{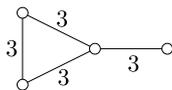}
\caption{The presentation diagram of $(\Gamma_\star,S_\star)$}
\label{fig:figure5.1} 
\end{figure}

\vspace{2mm}
We study infinite sequences of  Coxeter systems $(\Gamma,S)$ such that $(\Gamma_{\star},S_{\star})\preceq(\Gamma,S)$, see Section \ref{Section2.3}.  Beside that, we restrict  their presentation diagrams as follows.  
\begin{thm}
Let $(\Gamma_{k, N}, S)$ be a non-spherical, non-affine Coxeter system of dimension at most $2$ and rank $N$, such that all labels of the presentation diagram $X(\Gamma_{k,N}, S)$ are the same $k\geq 3$.
Denote by $E$ the number of edges of $X(\Gamma_{k, N},S)$. 

If $(\Gamma_{k, N},S)$ satisfies the following properties:
\begin{itemize}
\item[$(i)$] $(\Gamma_\star, S_\star)\preceq{(\Gamma_{k, N}, S)}$
\item[($ii$)]
$E=a(N-1)$  for a rational number $1 < a \leq \frac{(1+\varphi)^2}{3}$
\end{itemize}
Then, the growth rate $\tau(\Gamma_{k, N}, S)$ is a Perron number.
\label{thm:5.1}
\end{thm}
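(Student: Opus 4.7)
The plan is to derive an explicit integer polynomial $P(z)$ via Steinberg's formula whose greatest positive real root is $\tau := \tau(\Gamma_{k,N}, S)$, and then to show that no other root of $P$ lies on the circle $|z| = \tau$. Since $\tau^{-1}$ equals the radius of convergence of the growth series, every complex root of $P$ automatically satisfies $|\alpha| \le \tau$, so this dominance-on-the-circle step suffices to conclude that $\tau$ is a Perron number.

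Concretely, since every edge of $X(\Gamma_{k,N}, S)$ carries the label $k$, Steinberg's formula \eqref{eq:2-1} specializes to
\begin{equation*}
\frac{1}{f_{(\Gamma_{k,N}, S)}(z^{-1})} = 1 - \frac{N}{[2]} + \frac{E}{[2,k]} = \frac{P(z)}{[2,k]},
\end{equation*}
with
\begin{equation*}
P(z) = (z - (N-1))\,[k] + E = z^k - (N-2)\sum_{j=1}^{k-1} z^j + (a-1)(N-1),
\end{equation*}
a monic integer polynomial (since $a(N-1) = E \in \mathbb{Z}$) whose greatest positive real root is $\tau$. Condition ($i$) together with Theorem \ref{thm:2.2} supplies the lower bound $\tau \ge \tau(\Gamma_\star, S_\star) = \varphi^2$, and $P(\tau) = 0$ rewrites as the balance $\tau^k + (a-1)(N-1) = (N-2)\sum_{j=1}^{k-1} \tau^j$.

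Assume for contradiction that $P(z_0) = 0$ with $|z_0| = \tau$ and $z_0 \neq \tau$, so that $|z_0^k + (a-1)(N-1)| = (N-2)\bigl|\sum_{j=1}^{k-1} z_0^j\bigr|$. If $z_0 = \tau\zeta$ for a nontrivial $k$-th root of unity $\zeta$, then the left-hand side equals $\tau^k + (a-1)(N-1) = (N-2)\sum\tau^j$, whereas the strict cancellation $\bigl|\sum(\tau\zeta)^j\bigr| < \sum\tau^j$ (the unit complex numbers $\zeta^j$ cannot all be positively aligned unless $\zeta = 1$) makes the right-hand side strictly smaller---an immediate contradiction. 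The generic subcase $z_0^k \neq \tau^k$ is the technical heart: naive magnitude estimates only bound both sides from above by $\tau^k + (a-1)(N-1)$, which is compatible with equality. Here one parameterizes $z_0 = \tau e^{i\theta}$ and compares the two real-analytic functions $|z_0^k + (a-1)(N-1)|^2$ and $(N-2)^2\bigl|\sum z_0^j\bigr|^2$ of $\theta$; the quantitative bound $a \le (1+\varphi)^2/3 = \varphi^4/3$ from condition ($ii$), combined with $\tau \ge \varphi^2$, is calibrated precisely so these two functions coincide only at $\theta = 0$. The main obstacle is carrying out this threshold analysis rigorously; I expect the extremal configuration $\tau = \varphi^2$, corresponding to $(\Gamma_\star, S_\star)$ embedded as a minimal induced subdiagram of $X(\Gamma_{k,N}, S)$, to yield the tightest estimate and to explain the appearance of $\varphi$ in the numerical bound.
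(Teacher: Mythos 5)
Your overall strategy — exhibit $\tau$ as the largest-modulus root of an explicit integer polynomial $P(z)$ and then rule out any other root on the circle $|z|=\tau$ — is sound in spirit, and your derivation of $P(z)=z^k-(N-2)\sum_{j=1}^{k-1}z^j+(a-1)(N-1)$ is correct (it is the reciprocal of the denominator polynomial the paper works with). Your Case 1 ($z_0=\tau\zeta$ with $\zeta^k=1$, $\zeta\neq 1$) is fine: strict triangle inequality kills it, using $k\geq 3$. But Case 2, which you yourself flag as ``the technical heart,'' is not a proof sketch so much as a statement of hope. You say ``one parameterizes $z_0=\tau e^{i\theta}$ and compares the two real-analytic functions... the quantitative bound $a\leq(1+\varphi)^2/3$... is calibrated precisely so these two functions coincide only at $\theta=0$,'' but you never exhibit the inequality, never say where $\tau\geq\varphi^2$ enters the estimate, and never verify the claimed calibration. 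This missing analysis \emph{is} the theorem; everything else is routine. Your closing remark that you ``expect the extremal configuration $\tau=\varphi^2$ to yield the tightest estimate'' is speculation, not argument — and in fact the paper's proof does not work by pinning down any such extremal configuration of $\tau$.

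For comparison, the paper avoids your circle $|z|=\tau$ altogether. It works with the reciprocal polynomial $P_{k,N}(z)=1+(2-N)(z+\cdots+z^{k-1})+(a-1)(N-1)z^k$, whose smallest-modulus root is $r_{k,N}=1/\tau$, and applies Rouch\'e's theorem on the \emph{fixed} disk $D(0,r_\star)$ with $r_\star=(\varphi-1)^2=1/\varphi^2$. It splits $P_{k,N}=h_N+R_{k,N}$ with $h_N(z)=1+(2-N)(z+z^2)$ a quadratic having a unique root inside $D(0,r_\star)$, and then proves $|h_N(z)|>|R_{k,N}(z)|$ on $|z|=r_\star$ by an explicit minimization over the circle (with a case split at $N=8$) followed by monotonicity in $k$ and $N$ of a lower bound $\Lambda_{k,N}=N\alpha_k+\beta_k$; this is exactly where the constraint $a\leq(1+\varphi)^2/3$ is used, to make $\alpha_k$ decreasing in $k$. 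Working on a circle of \emph{known} radius $r_\star$ (rather than the unknown radius $\tau$) is what makes the quantitative estimates tractable. Your plan keeps $\tau$ as an unknown parameter in the inequality to be proved, which is precisely why you could not close Case 2.

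One smaller point worth tightening if you pursue your route: the claim ``every complex root of $P$ automatically satisfies $|\alpha|\leq\tau$'' needs a word about cancellation. It is true because the denominator $[2,k]$ has all roots on the unit circle and $\tau>1$, so any root of $P$ that cancels against $[2,k]$ still has modulus $1<\tau$; but you should say so rather than treat it as automatic.
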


\begin{figure} [htbp]
\centering
\includegraphics[scale=0.4]{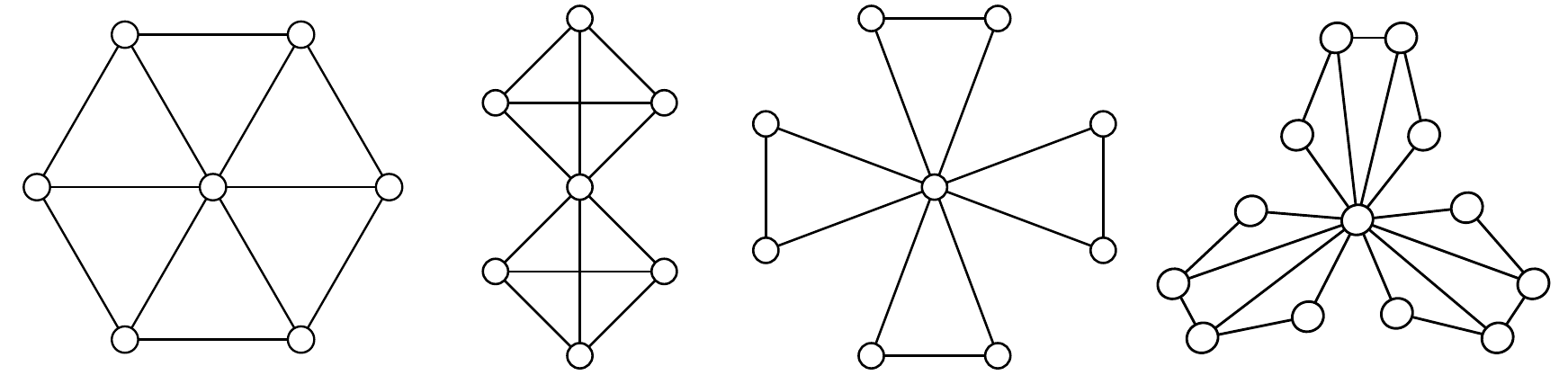}
\caption{The graphs $W_{7}$, $\mathcal W(4,2)$,  $F_{4}=\mathcal W(3,4)$, and $\mathcal T(5,3)$}
\label{fig:5.2}
\end{figure}

There exists a variety of infinite sequences of Coxeter systems satisfying the hypothesis of Theorem \ref{thm:5.1}.
We provide some examples in Figure \ref{fig:5.2} in terms of the underlying graphs of their presentation diagrams.  For terminolgy, we refer to \cite{Gallian98}.  Such Coxeter systems all satisfy $\chi\leq-1$.  
For instance,
the family of \textit{wheel graphs} $W_{N}$,  for all $N\geq 6$,
formed by a cycle of length $N-1$ and a universal vertex,
that is,
a central vertex linked to each other vertex.
In that case the number of edges of the graph is given by $E=2(N-1)$.
Same goes for the \textit{windmill graphs} of type $\mathcal W(4,l)$,
$l\geq 2$,
made of $l$ copies of complete graphs $K_4$ joined at common central vertex.
Also fitting the hypothesis of the Theorem is the family of \textit{friendship graphs} $F_{l}=\mathcal W(3,l)$ for $l\geq 3$,
for which $E=\frac{3}{2}(N-1)$.
Several variations of those graphs can be constructed.
For example,
we defined the \textit{triangulated bouquet} $\mathcal T(k,l)$ as the graph formed by  $l$ copies of $k$-cycles glued in a common vertex $v$,
such that any other vertex is linked to $v$.
In this case,
$v$ is universal and one has $E=\frac{2k-1}{k-1}(N-1)$.

\begin{rem}
A Coxeter system is said to be $\infty$-spanned if there exists a spanning tree of its Coxeter diagram with edges labelled $\infty$ only.  In \cite{KolpakovTalambutsa19},  Kolpakov and Talambutsa proved that the growth rate of  $\infty$-spanned Coxeter systems are Perron numbers.  We mention that, by the existence of an universal vertex in the presentation diagram of the Coxeter systems  discussed above,  such a spanning tree cannot be found in the corresponding Coxeter diagram. 
\end{rem}

\begin{proof}
Assume that $(\Gamma_{k,N},S)$ satisfies the hypothesis of Theorem \ref{thm:5.1}.  In what follows, we denote by $f_{k,N}(z)=\frac{Q_{k,N}(z)}{P_{k,N}(z)}$ 
the growth series of $(\Gamma_{k,N}, S)$,  by $ r_{k, N}$ its radius of convergence, and by $\tau_{k, N}$ the growth rate of $(\Gamma_{k,N}, S)$ respectively. 
 Recall that $r_{k,N}$ is the smallest positive real root of $P_{k,N}(z)$.  
In order to prove that $\tau_{k,N}$ is a Perron number,  we show that $r_{k,N}$ is the unique zero of $P_{k,N}(z)$ with smallest modulus.  

\vspace{2mm}
By Steinberg's formula \eqref{eq:2-1},
\begin{equation*}
\frac{1}{f_{k,N}(z^{-1})}=1-\frac{N}{[2]}+\frac{a(N-1)}{[2,k]}=\frac{[2,k]-N[k]+a(N-1)}{[2,k]} \ .
\end{equation*}
Therefore, the the denominator of $f_{k,N}(z)$ is given by 
\begin{equation*}
{P_{k,N}(z)}=1+(2-N)(z+z^2+\dots+z^{k-1})+(a-1)(N-1)z^k \ .
\end{equation*}
We write ${P_{k,N}(z)}=h_N(z)+R_{k,N}(z)$
where $h_N(z)$ is the quadratic polynomial 
\begin{equation*}
h_{N}(z)= 1+(2-N)(z+z^2)\, 
\end{equation*}
and $R_{k,N}(z)$ is the remaining part.  By hypothesis $(\Gamma_{\star},S_{\star})\preceq{(\Gamma_{k,N},S)}$, therefore by Theorem \ref{thm:2.3},
we conclude 
\begin{equation}
\tau(\Gamma_{\star}, S_{\star}) \leq \tau_{k,N} \ .
\label{eq:5-1}
\end{equation}
It follows that the associated radii of convergence satisfy $r_{k,N}\leq r_{\star}$.

\vspace{2mm}
In order to prove that $r_{k,N}$ is the unique root with smallest modulus of $P_{k,N}(z)$, we use Rouch{\'{e}}'s theorem on the open disk $D(0,r_{\star})$. We first observe that  $h_N(z)$ has a unique root in $D(0,r_{\star})$,  and we prove $\abs{h_{N}(z)}-\abs{R_{k,N}}>0$ on $|z|=r_{\star}$. 

\vspace{2mm}
An easy analysis of the roots shows that for any $N$, $h_N(z)$ admits a unique root in the open disk $D(0, r_{\star})$.
Moreover,  on the circle $|z|=r_{\star}$, one has 
\begin{equation}
 \abs{h_{N}(z)}\geq   
\begin{cases}\abs{1+(2-N)(r_{\star}+r_{\star}^2)} &  \text{ if } N\leq 8 \\ \abs{1+(2-N)(r_{\star}^2-r_{\star})} & \text{ if }N\geq 9 
\end{cases} \ .
\label{eq:5-2}
\end{equation}
This is proved by seeing that the minimum of $\abs{h_{N}(r_{\star}e^{i\theta})}$ is achieved at $\theta = 2\pi$ for $N\leq 8$,  and at $\theta = \pi$ for $N\geq 9$.

\vspace{2mm}
We assume that $N\geq 9$.  
For the case where $ N\leq 8$,  one can complete the proof by applying the same reasoning. 

\vspace{2mm}
\noindent
Let $z$  be such that $\abs{z}=r_{\star}$, and put  $\Delta_{k,N}(z)=
\abs{h_{k,N}(z)}-\abs{R_{k,N}(z)}.$ 
Since $a>1$, by the triangle inequality, 
 \begin{equation*}
\abs{R_{k,N}(z)}\leq (N-2)(r_{\star}^3+\dots+r_{\star}^{k-1})+(a-1)(N-1)r_{\star}^{k} \ .
 \end{equation*}
Also, by \eqref{eq:5-2},  one has 
$
 \abs{h_{N}(z)} \geq \abs{1+(2-N)(r_{\star}^2-r_{\star})} \geq 1+(2-N)(r_{\star}^2-r_{\star}).$
It follows that  
\begin{equation}
\Delta_{k,N}(z)\geq  N\left(1+2r-\frac{1-r_{\star}^{k}}{1-r_{\star}}-(a-1)r_{\star}^k\right)-3-4r_{\star}+2\frac{1-r_{\star}^k}{1-r_{\star}} +(a-1)r_{\star}^k \ .
 \label{eq:5-4}
\end{equation}
Denote the right-hand-side of \eqref{eq:5-4} by $\Lambda_{k,N}=N\alpha_k+\beta_k$, that is \begin{equation*} 
\alpha_k= 1+2r-\frac{1-r_{\star}^{k}}{1-r_{\star}}-(a-1)r_{\star}^k  \ ,  \quad \beta_k = -3-4r_{\star}+2\frac{1-r_{\star}^k}{1-r_{\star}} +(a-1)r_{\star}^k \ . 
\end{equation*}
Then we have
\begin{equation*}
\alpha_k-\alpha_{k+1}= r_{\star}^k(2-a+(a-1)r_{\star})>0 \ ,
\end{equation*}
so that $\alpha_k$ decreases with respect to $k$, by hypothesis on $a$.
Hence \begin{equation*}
\alpha_{k}
\geq  \lim\limits_{k\rightarrow\infty} \alpha_{k} = 1+2r_{\star}-\frac{1}{1-r_{\star}}=\frac{7-3\sqrt 5}{2}>0 \ .
\end{equation*} 
This precisely means that $\Lambda_{k,N}$ increases with respect to $N$.  
In addition, we get $ \Lambda_{k,N}-\Lambda_{k+1,N} >0$ for $N >\lambda_a$, where
\begin{equation*}
\lambda_a=\frac{3-a+(a-1)r_{\star}}{2-a+(a-1)r_{\star}} \ .
\end{equation*}
Therefore,  for each  $a\leq \frac{(1+\varphi)^2}{3}$,  we find that $\Lambda_{k,N}$ decreases with respect to $k$ for all $N\geq 9$.  Consequently, 
\begin{align*}
 \Delta_{k,N}(z) \geq  \lim\limits_{k\rightarrow\infty} \Lambda_{k,N} =
 N\left(1+2r_{\star}-\frac{1}{1-r_{\star}} \right)-3-4r_{\star}+\frac{2}{1-r_{\star}} \, ,
 \end{align*}
which leads to  $ \Delta_{k,N} (z)>0$ when $N>  \frac{3+4r_{\star}-2\frac{1}{1-r_{\star}}}{1+2r_{\star}-\frac{1}{1-r_{\star}}} = \frac{11+\sqrt{45}}{2}$.  This is true for any $N\geq 9$, which finishes the proof. 
 \end{proof}

 We proved in  Theorem \ref{thm:3.2} and Theorem \ref{thm:4.4} that growth rates of Coxeter systems of dimension at most $2$ with positive and vanishing Euler characteristic are Salem and Pisot numbers respectively.    By Theorem \ref{thm:5.1},  the growth rates of infinitely many Coxeter systems with negative Euler characteristic are Perron numbers.  Inspired by these observations,  we make the following claim.
\begin{conj}
\textit{The growth rate of any Coxeter system of dimension at most $2$ is a Perron number.}
\end{conj}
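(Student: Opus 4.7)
The plan is to prove that $\tau_{k,N} := \tau(\Gamma_{k,N}, S)$ is a Perron number by showing that its reciprocal $r_{k,N}$ is the unique root of smallest modulus of the denominator $P_{k,N}(z)$ of the growth series $f_{k,N}(z)$. Since the minimal polynomial of $r_{k,N}$ divides $P_{k,N}(z)\in \mathbb{Z}[z]$, every Galois conjugate of $r_{k,N}$ distinct from $r_{k,N}$ itself is a root of $P_{k,N}$, and the uniqueness property will then force all such conjugates to have strictly larger modulus. Equivalently, every Galois conjugate of $\tau_{k,N}$ distinct from $\tau_{k,N}$ has strictly smaller modulus, which is the definition of a Perron number.

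First, I would apply Steinberg's formula \eqref{eq:2-1}. Since every edge of $X(\Gamma_{k,N}, S)$ carries the same label $k$ and there are $E = a(N-1)$ edges, this yields
\begin{equation*}
\dfrac{1}{f_{k,N}(z^{-1})} = 1 - \dfrac{N}{[2]} + \dfrac{a(N-1)}{[2,k]},
\end{equation*}
so that the denominator of $f_{k,N}(z)$ is the monic polynomial of degree $k$ given by
\begin{equation*}
P_{k,N}(z) = 1 + (2-N)(z + z^2 + \cdots + z^{k-1}) + (a-1)(N-1)z^k.
\end{equation*}
I would then split $P_{k,N}(z) = h_N(z) + R_{k,N}(z)$, where $h_N(z) = 1 + (2-N)(z + z^2)$ is the degree-$2$ truncation and $R_{k,N}$ gathers the higher-order terms. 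The strategy is to apply Rouch\'e's theorem on the open disk $D(0, r_\star)$, where $r_\star = (\varphi-1)^2$ is the radius of convergence of $(\Gamma_\star, S_\star)$ from Example \ref{ex:1}.

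A direct analysis shows that $h_N$ has exactly one root in $D(0, r_\star)$. The core of the proof is then to verify that $|h_N(z)| > |R_{k,N}(z)|$ on the circle $|z| = r_\star$. Parametrizing $z = r_\star e^{i\theta}$ and locating the minimum of $|h_N(r_\star e^{i\theta})|$ (at $\theta = 2\pi$ when $N \leq 8$, at $\theta = \pi$ when $N \geq 9$) yields a clean lower bound on $|h_N|$. Combined with the triangle inequality
\begin{equation*}
|R_{k,N}(z)| \leq (N-2)(r_\star^3 + \cdots + r_\star^{k-1}) + (a-1)(N-1)\,r_\star^k,
\end{equation*}
this produces a lower estimate $\Delta_{k,N}(z) := |h_N(z)| - |R_{k,N}(z)| \geq \Lambda_{k,N} = N\alpha_k + \beta_k$ with explicit coefficients $\alpha_k,\beta_k$.

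The main obstacle is the monotonicity analysis needed to conclude $\Lambda_{k,N} > 0$ uniformly in $(k, N)$. I would show that $\alpha_k$ is strictly positive and decreasing in $k$, with limiting value $(7 - 3\sqrt 5)/2 > 0$, which makes $\Lambda_{k,N}$ increasing in $N$. A second monotonicity argument, crucially using the hypothesis $a \leq (1+\varphi)^2/3$, shows that $\Lambda_{k,N}$ is decreasing in $k$ for $N$ past an explicit threshold $\lambda_a$. Passing to the limit $k\to\infty$ then reduces the inequality to a numerical check of the form $N > (11 + \sqrt{45})/2$, valid for all $N \geq 9$; the finitely many cases $N \leq 8$ are dispatched by the analogous argument using the alternative lower bound on $|h_N|$. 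Once Rouch\'e's theorem certifies that $P_{k,N}$ has exactly one root in $D(0, r_\star)$, I would invoke hypothesis $(i)$ together with Theorem \ref{thm:2.2} to obtain $r_{k,N} \leq r_\star$, so that this unique root is necessarily $r_{k,N}$, completing the proof.
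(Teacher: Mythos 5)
There is a genuine gap: what you prove is not the stated claim. The statement under review is the paper's \emph{Conjecture}, asserting that the growth rate of \emph{every} Coxeter system of dimension at most $2$ is a Perron number, whereas your argument only treats the restricted family of Theorem \ref{thm:5.1}: presentation diagrams all of whose labels equal a single $k\geq 3$, with $(\Gamma_\star,S_\star)\preceq(\Gamma_{k,N},S)$ and edge count $E=a(N-1)$ for a rational $1<a\leq\frac{(1+\varphi)^2}{3}$. Every step of your Rouch\'e scheme leans on these hypotheses: the single label $k$ is what makes the denominator collapse to $P_{k,N}(z)=1+(2-N)(z+\cdots+z^{k-1})+(a-1)(N-1)z^k$, so that a quadratic truncation $h_N$ plus a geometric tail bound is even available; the constraint $a\leq\frac{(1+\varphi)^2}{3}$ is exactly what makes $\Lambda_{k,N}$ monotone in $k$ so that the limit $k\to\infty$ controls all $k$; and the domination $(\Gamma_\star,S_\star)\preceq(\Gamma_{k,N},S)$ is what yields $r_{k,N}\leq r_\star$, without which the unique zero located in $D(0,r_\star)$ need not be $r_{k,N}$ at all. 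For a general $2$-dimensional Coxeter system with negative Euler characteristic and mixed labels $k_1,\ldots,k_r$, the denominator coming from Steinberg's formula \eqref{eq:2-1} has no such two-parameter normal form, there is no single $a$, the triangle-inequality bound on the tail is not dominated by $|h_N|$ on $|z|=r_\star$ in general, and no inequality $r\leq r_\star$ is available. The cases $\chi\geq 0$ are covered by Theorems \ref{thm:3.2} and \ref{thm:4.4} (Salem and Pisot numbers are Perron), but the case $\chi\leq -1$ in full generality is precisely what the paper leaves open; your proposal reproduces the paper's proof of Theorem \ref{thm:5.1} and therefore establishes only Theorem \ref{theorem:C}-type examples, not the conjecture.

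Two smaller points inside the part you do prove: $P_{k,N}$ is not monic in general (its leading coefficient is $(a-1)(N-1)=E-(N-1)$), so you should argue via the fact that the minimal polynomial of $r_{k,N}$ over $\mathbb{Q}$ divides $P_{k,N}$ in $\mathbb{Q}[z]$, together with the observation from Section \ref{Section2.3} that $\tau_{k,N}$ is an algebraic integer; and the strict inequality $|h_N|>|R_{k,N}|$ on $|z|=r_\star$ forces $r_{k,N}<r_\star$ strictly (no zero on the circle), which is the clean way to conclude that the unique zero in the open disk is $r_{k,N}$. These are repairable, but they do not affect the main verdict: the conjecture itself remains unproved by this argument.
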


\noindent
Note that,  there exist  Coxeter systems of dimension at most $2$ such that $\chi\leq-1$ whose growth rates are Perron numbers but  {\it  are neither} Pisot numbers nor Salem numbers.
For instance, the $3$-dimensional hyperbolic \textit{ideal} Coxeter system $(\Gamma_0, S_0)$ whose presentation diagram admits labels $3$ only and is depicted in Figure \ref{fig:figure5.3}.
\begin{figure}[htbp]
\centering
\includegraphics[scale=0.5]{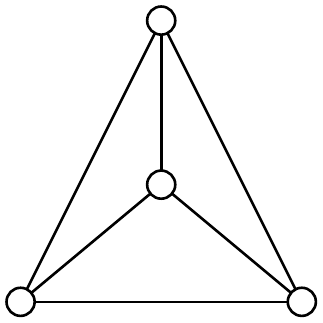}
\caption{The presentation diagram of $(\Gamma_0, S_0)$}
\label{fig:figure5.3}
\end{figure}

\subsection*{Acknowledgement}
The authors would like to express their gratitude to Professor Ruth Kellerhals for helpful discussions.
The second  author is supported by JSPS Grant-in-Aid for Early-Career Scientists Grant Number JP20K14318.

\bibliographystyle{plain}
\bibliography{reference.bib}

\end{document}